\newtheorem{thm}{Theorem}
\newtheorem{cor}[thm]{Corollary}
\newtheorem{defi}[thm]{Definition}
\newtheorem{ex}[thm]{Example}
\newtheorem{lem}[thm]{Lemma}
\newtheorem{prop}[thm]{Proposition}
\newtheorem{rem}[thm]{Remark}
\newenvironment{proof}{{\bf Proof. }}{\hfill$\rule{1ex}{1ex}$\par\medskip}
\begin{document}

\title{Nuclear properties of loop extensions}

\author{P\'eter T. Nagy
\date{Institute of Applied Mathematics, 
		\'Obuda University,\\ H-1034 Budapest, B\'ecsi \'ut 96/b, Hungary;\\ {e--mail:}~{nagy.peter@nik.uni-obuda.hu}}}
\footnotetext{2010 {\em Mathematics Subject Classification:\/20N05}.}
\footnotetext{{\em Key words and phrases:} Loops, non-associative extensions, left, middle and right nuclei, inverse property, Bol identity.\par}
\maketitle
\hspace{4cm} {\Large In memory of Karl Strambach}

\begin{abstract}
The objectives of this paper is to give a systematic investigation of extension theory
of loops. A loop extension is (left, right or middle) nuclear, if the kernel of the extension
consists of elements associating (from left, right or middle) with all elements of the
loop. It turns out that the natural non-associative generalizations of the Schreier's
theory of group extensions can be characterized by different types of nuclear
properties. Our loop constructions are illustrated by rich families of examples in
important loop classes.
\end{abstract}


\section{Introduction}

Over the past two decades, a non-associative extension theory of binary systems has attracted considerable attention, especially from the viewpoints of construction and investigation of quasigroups and loops with special features (e.g. \cite{BDRX}, \cite{BX}, \cite {DrV}, \cite{FiSt1}, \cite{FiSt2}, \cite{GR}, \cite{KN}, \cite{KJ}, \cite{npl}, \cite{nsl}).  Among these, a non-associative generalization of the group theoretical Schreier extension (cf. \cite{NS}) seems to be the simplest and serves as a prototype. 
A loop $L$ is an extension of a normal subloop $N$ by the loop $K$ if $K$ is isomorphic to the factor loop $L/N$, the normal subloop is called the kernel subloop of the extension. Equivalently, there is a short exact sequence of loops 
\[1\to M\to L\to K\to 1,\]
such that $M$ is isomorphic to the normal subloop $N$. 
Extension theory of loops deals with classification of all possible extensions of loops and studies their properties. The 
related problems of group theory are completely solved by the Schreier theory of group extensions, cf. \cite{Sch1}, \cite{Sch2}, \cite{Kur}, 
Chapter XII, \S 48-49, pp. 121--131 in \cite{Kur}. An interesting property of extensions is the (left, right or middle) nuclear property, the kernel of a (left, right or middle) nuclear extension consists of elements associating (from left, right or middle) with all elements of the loop. The related discussions are applied to the construction and classification of new families of loops (e.g. \cite{DrV}, \cite{HJ}, \cite{Jedl}, \cite{HJKV}, \cite{KKPhV}, \cite{KPhV}, \cite{NSt1}, \cite{NSt2}).\\
Isomorphism classes of group extensions can be described with the cohomology classes assigned to the extension. Hence for the extension theory of groups homological algebra yields a natural framework, and considerable efforts have been made to extend the cohomological method even in the non-associative case. S. Eilenberg, S. Maclane \cite{EMC} and J. P. Lafuante \cite{Laf} introduced and studied a class of non-associative extension of groups by groups, called loop prolongation.  They observed that cohomological cocycles can measure the non-associativity of loop extensions, moreover a cohomological interpretation of the equivalence of extensions was formulated. K. W. Johnson, C. R. Leedham-Green \cite{JLG}, J. D. H. Smith \cite{JDHSm}, and R. Lal, B. K. Sharma \cite{LaSh} initiated to develop the theory of extensions to more general multiplicative structures. The methods of cohomology theory had successful applications to classification problems of central extensions of abelian groups by loops and to extensions of Bol and Moufang loops by G. P. Nagy, P. Vojt\v{e}chovsk\'y, D. Daly, \cite{NaGP}, \cite{NaVo}, \cite{DaVo}, N. Nishig\^{o}ri \cite{Nish1}, \cite{Nish2} and R. Jimenez, Q. M. Mel\'endez \cite{JiMe}. \\
However, to interpret the constructions of loop extensions in cohomology theory requires some structural theorems for the considered classes of loops. In the general case of nonassociative multiplication, the cohomological approaches encounter difficulties and limitations, hence the study requires more technical and combinatorial tools. The present contribution is therefore dedicated to the systematic investigation of the basic constructions of nonassociative extension theory using direct computational methods.\\
We studied in our joint work with K. Strambach \cite{NS} the Schreier extensions, i.e. extensions of groups by loops defined by the same formula as in group theory, and obtained characterizations and constructions of examples for interesting subclasses of such loops. It turned out that in this case the extension is middle and right nuclear. The non-associative extension theory of Schreier type is investigated in a broader context in our papers with I. Stuhl \cite{NSt1} and \cite{NSt2}, namely we gave characterizations of some right nuclear extensions of groups by quasigroups with right identity element.\\
In 1944 A. A. Albert in \cite{Alb}, (Theorem 6, pp. 406-407) and R. H. Bruck in \cite{Bru1}, (Theorem 10\,B, pp. 166-168) initiated and thoroughly investigated the most general constructions of loop extensions. It turned out that Bruck's construction of loop extensions has many more degrees of freedom than in Schreier's theory of group extensions, namely that the multiplication function between different non-trivial left cosets of the kernel subgroup can be arbitrarily prescribed, (cf. \cite{Che}, pp. 35--43). Albert's description of general loop extensions yields a canonical form of the extended loop expressed with the help of a left transversal to the normal subgroup, any extension can be represented in Albert's form.\\
The objectives of this paper are to give a systematic investigation of nuclear properties of extensions, to find algebraic characterizations of their different types and to examine the limits of the non-associative generalization of theory of extensions. 
The first section contains the introduction and the necessary definitions and the basic constructions. Section 2 is devoted to the general theory of loop extensions and to the discussion of their nuclear properties. \S 2.1 examines the relationship between the Bruck's and Albert's description of extensions. \S 2.2 contains a discussion of the internal determination of an Albert extension by a triple of a normal subgroup, a loop isomorphism to the factor loop and a left transversal to the kernel subgroup. \S 2.3 is devoted to the classification of extensions of order $6$ and to the description of their nuclear properties. \S 2.4 gives a characterization of right nuclear extensions in Albert's form, in this case the multiplication function between cosets of the kernel subgroup is a left multiplication in the kernel loop. \S 2.5 contains a construction of right nuclear but not nuclear extensions having left or right inverse property.
In Section 3 we investigate right and middle nuclear extensions, called the Schreier extensions, which are the closest analogues of the group theoretical case. \S 3.1 is devoted to the construction of large classes of non-nuclear Schreier extensions satisfying the right Bol identity, using an extension method of Bol loops developed in \cite{nsl} for classification of compact Bol loops. In \S 3.2 a discussion of internal constructions of Schreier extensions is given. In this study, the conjugation of the kernel subgroup by elements of the loop are an interesting tool. 
\S 3.3 is devoted to the investigation of automorphism-free and factor-free Schreier extensions by analogy in group theory and to the discussion of changing an extension by changing the underlying isomorphism or the left transversal of the decomposition. 
\S In 3.4 we investigate the relationship between different Schreier extensions of a loop.

\subsection*{Preliminaries}

A {\it quasigroup} $L$ is a set with a multiplication map $(x,y)\mapsto x\cdot y:L\times L\to L$ such that for each $x\in L$ the  {\it left translations}  $\lambda_x :L\to L$,  $\lambda_xy = xy$,  and the {\it right translations}  $\rho_x : L\to L$, $\rho_xy =  yx$, are bijective maps. The left and right division operations on $L$ are defined by the maps $(x,y)\mapsto x\backslash y = \lambda_x^{-1}y$, respectively  $(x,y)\mapsto x/y = \rho_y^{-1}x$,  
$x,y\in L$. An element $e\in L$ is called left (right) identity if it satisfies $e\cdot x = x$ ($x\cdot e = x$) for any $x\in L$. A left and right identity is called identity element. A quasigroup $L$ is a {\it loop} if it has an identity element. The {\it right inner mappings} of a loop $L$ are the maps $\rho^{-1}_{yx} \rho_x \rho_y: L\to L$, $x,y\in L$, the group generated by the right inner mappings is the {\it right inner mapping group}. The automorphism group of of a loop $L$ is denoted by $\mathrm{Aut}(L)$.\\
We will reduce the use of parentheses by the following convention: juxtaposition will denote multiplication, the division operations are less binding than juxtaposition, and the multiplication symbol is less binding than the divisions. For instance the expression $xy / u \cdot v \backslash w$ is
a short form of $((x\cdot y)/ u)\cdot(v\backslash w)$.\\
The subgroups 
\[\begin{split}
&\mathcal{N}_l(L)\; = \{u\in L; \;ux\cdot y = u\cdot xy, \;\; x, y\in L\},\\
&\mathcal{N}_r(L)\, = \{u\in L; \;xy\cdot u = x\cdot yu, \;\; x, y\in L\},\\
&\mathcal{N}_m(L) = \{u\in L; \;xu\cdot y = x\cdot uy, \;\; x, y\in L\}\end{split}\]
of a loop $L$ are the {\it left, right,} respectively  {\it middle nuclei}, 
and $\mathcal{N}(L)=\mathcal{N}_l(L)\cap \mathcal{N}_r(L)\cap \mathcal{N}_m(L)$ is  the {\it nucleus} of $L$. A non-associative loop is called {\it proper loop}. If a subloop $K\subset L$ is contained in the (left, right, respectively middle) nucleus of $L$, the it is {\it (left, right,} respectively  {\it middle) nuclear}, in this case $K$ is necessarily a group. 
For a subloop $K\subset L$ the subset $\mathcal{C}_L(K) = \{u\in L; \; u\cdot x = x\cdot u, \;\; x\in L\}$ is the {\it commutant} of $K$. \\
A loop $L$ satisfies the {\it left}, respectively the {\it right inverse property} 
if there exists a bijection $x\mapsto x^{-1}:L\to L$ such that $ x^{-1}\cdot xy = y$, respectively 
$yx\cdot x^{-1}= y$ holds for all $x, y\in L$. A loop with {\it left} and {\it right inverse property} has {\it  inverse property}. The {\it left alternative}, respectively {\it right alternative} property of $L$ is defined by the identity 
$x\cdot xy = x^2 y$, respectively $yx\cdot x = y x^2$,\; $L$ is {\it flexible} if $ x\cdot yx = xy\cdot x$ for all $x, y\in L$. A {\it left}, respectively {\it right Bol loop} is defined by the identity $(x\cdot yx)z = x(y\cdot xz)$, respectively  $z(xy\cdot x)   = (zx\cdot y)x$. Any left (right) Bol loop has the left (right) alternative and the left (right) inverse property. \\
A subloop $N\subset L$ is {\it normal} if it is the kernel of a homomorphism of $L$. The {\it factor loop} $L/N$ is the loop induced on the set of left cosets of the normal subloop $N$. \\
A subset $\Sigma\subset L$ is a {\it left (right) transversal}  
to a subgroup $G\subset L$ if $\Sigma$ consists of representatives of left (right) cosets of $G$ such that $e\in\Sigma$.  

\section{Albert extensions}

\subsection{Bruck and Albert extensions}

Let $K$ and $N$ be loops and denote by small Greek, respectively Latin letters the elements of $K$, respectively of $N$ and by $\epsilon\in K$ and $e\in G$ their identity elements. 
\begin{defi} \label{extension}
	A loop $L$ is an \emph{extension} of the normal proper subloop $N$ by the loop $K$, if there is an isomorphism $\kappa$ of $K$ onto $L/N$. We say that $N\lhd L$ is the kernel and $\kappa: K\to L/N$ is the underlying isomorphism of the extension.\\
	An extension is (\emph{left, right,} respectively \emph{middle}) \emph{nuclear}, if the kernel subloop is (\emph{left, right}, 
	respectively \emph{middle}) \emph{nuclear} subgroup of $L$. 
\end{defi}
A. A. Albert formulated a principal description of extensions of loops by loops what he called \emph{crossed extension of loops by loops}. R. H. Bruck investigating the  general extensions of quasigroups by quasigroups gave a somewhat more general construction for extensions of loops by loops, which can be summarized as follows:
\begin{lem} \label{Bruck} (R. H. Bruck, \cite{Bru1}, Theorem 10B, p. 168 and \cite{Bru2}, p. 779.) 
Assume that a loop $L$ with identity element $e\in L$ is an extension of a normal proper subloop $N$ by a loop $K$. Then $L$ is isomorphic to the loop $K \times^\triangledown N$ defined on $K\times N$ by the multiplication  
\begin{equation} \label{brumul} (\alpha,a)(\beta,b)= (\alpha\beta,\triangledown_{\alpha,\beta}(a,b))),\quad  \alpha,\beta\in K,\quad a,b\in N,\end{equation}  
where $(a,b)\mapsto\triangledown_{\alpha,\beta}(a,b)$, $\alpha,\beta\in K$ are quasigroup multiplications on $N$ satisfying the conditions 	
\begin{enumerate}
\item[\emph{(i)}] $\triangledown_{\epsilon,\epsilon}$ coincides with the multiplication of $N$,  
\item[\emph{(ii)}] $e\in N$ is a left identity element of the multiplications  $\triangledown_{\epsilon,\alpha}$, 
\item[\emph{(iii)}] $e\in N$ is a right identity element of the multiplications  $\triangledown_{\alpha,\epsilon}$, $\alpha \in K$.
\end{enumerate} 
Conversely, if for any $\alpha,\beta\in K$ there is given a quasigroup multiplication $\triangledown_{\alpha,\beta}$ on a loop $N$ satisfying the conditions \emph{(i), 
(ii), (iii)}, then the multiplication  (\ref{brumul})  
determines a loop $K \times^\triangledown N$ which is an extension of the subloop $\{\epsilon\}\times N$ by the loop $K$.
\end{lem}
\begin{defi} The loop $K \times^\triangledown N$ defined on $K\times N$ determined by the quasigroup multiplication $\triangledown_{\alpha,\beta}(a,b)$, $\alpha,\beta\in K$ satisfying the conditions \emph{(i)}, \emph{(ii)}, \emph{(iii)} of Lemma \ref{Bruck} is called the \emph{Bruck extension} of the loop $N$ by the loop $K$. 
\end{defi}
For the determination of crossed extensions A. A. Albert gave the following construction: 
\begin{lem}\label{Albert} (A. A. Albert, \cite{Alb}, Theorem 6, pp. 406-407.) Let $\Sigma$ be a left transversal to a normal subloop $N$ in a loop $L$ and let $\kappa:K\to L/N$ be an isomorphism of the loop $K$ to the factor loop $L/N$. If $l_\sigma$ denotes the unique element of the intersection $\Sigma\cap\kappa(\sigma)$, $\sigma\in K$, then the function  $\triangledown: K\times K\times N\times N\to N$ defined by 
\begin{equation} \label{triangledown} \triangledown_{\xi,\eta}(x,y) = l_{\xi\eta}\backslash (l_\xi x \cdot l_\eta y),\; \xi,\eta\in K, \;x,y\in N,  \end{equation} 
determines a Bruck extension $K \times^\triangledown N$ of the loop $N$ by the loop $K$ and the map $\Phi:(\xi,x) \mapsto l_\xi x$ is an isomorphism $\Phi:K \times^\triangledown N\to L$.
\end{lem}
\begin{proof} If $\pi:L\to L/N$ is the canonical homomorphism then we have $\pi\left(l_{\xi\eta}\backslash (l_\xi x \cdot l_\eta y)\right) = N$ 
	and hence $\triangledown_{\xi,\eta}(x,y)$ is contained in $N$. The conditions $\{i\}$, $\{ii\}$, $\{iii\}$ of Lemma \ref{Bruck} are routinely verified.
One has $\Phi((\xi,x)(\eta,y)) = l_\xi x\cdot l_\eta y = \Phi(\xi,x)\Phi(\eta,y)$  
and hence the assertion follows.
\end{proof}
\begin{rem}\label{leftraiii} Let $\Sigma$ be a left transversal to $N$ in $L$
	 and $K \times^\triangledown N$ the Bruck extension determined by the quasigroup multiplications (\ref{triangledown}). Then  
\begin{enumerate} 
	\item[\emph{(iii$^\prime$)}] $e\in N$ is the identity element of the multiplications  $\triangledown_{\alpha,\epsilon}$, $\alpha \in K$. 
\end{enumerate} 
\end{rem}
Clearly, (iii) of Lemma \ref{Bruck} is a consequence of (iii$^\prime$) of Remark \ref{leftraiii}. 
\begin{defi} \label{Bruckex} A Bruck extension $K \times^\triangledown N$ is called an \emph{Albert extension} of $N$ by $K$ if the quasigroup multiplications $\triangledown_{\alpha,\beta}:N\times N\to N$, $\alpha,\beta\in K$ satisfy 	
	\begin{enumerate}
		\item[\emph{(a)}] $\triangledown_{\epsilon,\epsilon}$ coincides with the multiplication of $N$,  
		\item[\emph{(b)}] $e\in N$ is a left identity element of  $\triangledown_{\epsilon,\alpha}$, $\alpha \in K$, 
		\item[\emph{(c)}] $e\in N$ is the identity element of $\triangledown_{\alpha,\epsilon}$, $\alpha \in K$.
	\end{enumerate} 
\end{defi}
It follows from Lemma \ref{Albert} and Remark \ref{leftraiii}, that any Bruck extension $K \times^\triangledown N$ is isomorphic to an Albert extension. Now, we construct such an isomorphism. 
\begin{thm} \label{brualb}
Let  $K \times^\triangledown N$ be a Bruck extension and $\varphi_\alpha:N\to N$ the maps $\varphi_\alpha:x\mapsto \triangledown_{\alpha,\epsilon}(e,x)$, $\alpha\in K$. Then the map $\Phi:(\xi,x)\mapsto(\xi,\varphi_\xi(x))$ is an isomorphism of the Albert extension $K\times^{\triangledown^*}N$ determined by the function $\triangledown^*: K\times K\times N\times N\to N$, 
\[\triangledown^*_{\alpha,\beta}(a,b) = \varphi_{\alpha\beta}^{-1}\triangledown_{\alpha,\beta}(\varphi_\alpha(a),\varphi_\beta(b)), \quad \alpha,\beta\in K,\] 
onto the Bruck extension $K \times^\triangledown N$.
Moreover,  the isomorphism $\Phi$ induces the identity map on the normal subgroup $\{\epsilon\}\times N$. 
\end{thm}
\begin{proof} We have \[\triangledown^*_{\epsilon,\alpha}(e,b) = \varphi_{\alpha}^{-1}\triangledown_{\epsilon,\alpha}(e,\varphi_\beta(b)) = b\quad\text{for any}\quad\alpha\in K,\; b\in N,\] hence $e\in N$ is a left identity element of  $\triangledown_{\epsilon,\alpha}$. Moreover $e\in N$ is the identity element of $\triangledown_{\alpha,\epsilon}$ for any $\alpha \in K$, since the identities 
\[\triangledown^*_{\alpha,\epsilon}(e,b) = \varphi_{\alpha}^{-1}\triangledown_{\alpha,\epsilon}(e,b) = b \quad\text{and}\quad \triangledown^*_{\alpha,\epsilon}(b,e) = \varphi_{\alpha}^{-1}\triangledown_{\alpha,\epsilon}(\varphi_\alpha(b),e) = 
\varphi_{\alpha}^{-1}\varphi_\alpha(b) = b\] 
follow from $\varphi_\alpha(e) = e$, $\varphi_\epsilon(b) = b$ and from the right identity property of $e\in N$ with respect to the multiplication $\triangledown_{\alpha,\epsilon}$. 
Hence $K\times^{\triangledown^*}N$ is an Albert extension. Since $\triangledown_{\epsilon,\epsilon}(e,x) = x$, one has $\Phi(\epsilon,x) = (\epsilon,x)$ for any $x\in N$. The multiplication $(\alpha,a)*(\beta,b)$ in $K \times^{\triangledown^*}N$ satisfies  
\[\Phi(\alpha,a)\cdot\Phi(\beta,b) = \left(\alpha\beta,\triangledown_{\alpha,\beta}(\varphi_\alpha(a),\varphi_\beta(b))\right) = \left(\alpha\beta,\varphi_{\alpha,\beta}(\triangledown^*_{\alpha,\beta}(a,b))\right) = \Phi((\alpha,a)*(\beta,b)),\] 
consequently the  map $(\xi,x)\mapsto(\xi,\varphi(x))$ is an isomorphism from $K\times^{\triangledown^*}N$ to  $K \times^\triangledown N$.  
\end{proof}
\begin{cor} \label{coincide}
If $\Sigma = \{(\xi,e),\; \xi\in K\}$ is the left transversal to the subgroup $\{\epsilon\}\times N$ in an Albert extension $K\times^\triangledown N$ then the multiplications defined by (\ref{triangledown}) with respect to $\Sigma$ coincide with the multiplications $\triangledown_{\alpha,\beta}$ for any $\alpha,\beta\in K$.
\end{cor}
\begin{proof} If $l_\xi = (\xi,e)$ then we have $l_{\xi\eta}\backslash (l_\xi x \cdot l_\eta y) = (\xi,e)\backslash ((\xi,x)(\eta,y)) = (\epsilon,\triangledown_{\xi,\eta}(x,y))$.
\end{proof}

\subsection{Albert decompositions}

Let $L$ be a loop, $N$ a normal subloop of $L$ and let $K\times^\triangledown N$ be an Albert extension of $N$ by $K$. 
\begin{lem} If the map $\mathcal{F}:K\times^\triangledown N\to L$ is an isomorphism satisfying $\mathcal{F}(\epsilon,t) = t$ for any  $t\in N$, then the induced map $$\sigma\mapsto\mathcal{F}(\sigma,e)G:K\to L/N$$ is an isomorphism, too. 
\end{lem}
\begin{proof}  The assertion follows from the observation that a loop isomorphism   $\tau:M\to N$ induces an isomorphism $M/H\to N/\tau(H)$, where $H\lhd M$ is a normal subloop.
\end{proof}
\begin{defi} An \emph{Albert decomposition of a loop $L$ with respect to} its normal subloop $N$ is an isomorphism $\mathcal{F}$ of an Albert extension $K\times^\triangledown N$ onto $L$ satisfying $\mathcal{F}(\epsilon,t) = t$ for any  $t\in N$. The  map  $\sigma\mapsto\mathcal{F}(\sigma,e)G:K\to L/N$ is the  \emph{underlying isomorphism} of the Albert decomposition $\mathcal{F}$.\\ 
	A \emph{data triple $(N,\kappa,\Sigma)$ of an Albert decomposition} of the loop  $L$ consists of 
	\begin{enumerate}
		\item[(\emph{i)}] a normal subloop $N$ of $L$,
		\item[(\emph{ii)}] an isomorphism $\kappa$ of a loop $K$ onto $L/N$, 
		\item[(\emph{iii)}] a left transversal $\Sigma$ to $N$ in $L$.
	\end{enumerate}
\end{defi}
For a data triple $(N,\kappa,\Sigma)$ of an Albert decomposition of $L$ we define the maps 
\begin{equation}\label{lTfAlb} l_\sigma = \Sigma\cap\kappa(\sigma),  \quad \text{and}\quad \triangledown_{\sigma,\tau}(s,t) = 
l_{\sigma\tau}\backslash(l_\sigma s\cdot l_\tau t),\quad \sigma,\tau\in K,\; s,t\in N. \end{equation}
\begin{defi} The loop determined by the maps \emph{(\ref{lTfAlb})} on $K\times N$ will be called the \emph{Albert extension determined by the data triple} $(N,\kappa,\Sigma)$ and denoted by $\mathcal{L}(N,\kappa,\Sigma)$. 
\end{defi} 
\begin{thm} \label{sectalb} Let $\mathcal{L}(N,\kappa,\Sigma)$ be the Albert extension determined by the data triple $(N,\kappa,\Sigma)$. The map $\mathcal{F}:\mathcal{L}(N,\kappa,\Sigma)\to L$ defined by $\mathcal{F}(\sigma,s) = l_\sigma s$ is an Albert decomposition of $L$ with respect to $N$ with the underlying isomorphism $\kappa:K\to L/N$.\\
	Conversely, let $\mathcal{F}:K\times^\triangledown N\to L$ be an Albert decomposition of $L$ with respect to $N$. Define the map $\kappa$ by $\sigma\mapsto\mathcal{F}(\sigma,e)N$ and the transversal $\Sigma$ by  $\{\mathcal{F}(\sigma,e); \;\sigma\in K\}$. Then $(N,\kappa,\Sigma)$ is a data triple of an Albert decomposition of $L$ and $K\times^\triangledown N = \mathcal{L}(N,\kappa,\Sigma)$.
\end{thm}
\begin{proof} An element $x\in L$ can be uniquely decomposed as a 
	product $x = l_\sigma s$ with $l_\sigma\in\Sigma$, $s\in N$. The bijective map $\mathcal{F}:K\times G\to L$ defined  by $\mathcal{F}(\sigma,s) = l_\sigma s$ satisfies $\mathcal{F}(\epsilon,t) = t$ for any  $t\in N$.
	We have  
	$\mathcal{F}(\sigma,s)\mathcal{F}(\tau,t) = l_{\sigma}s\cdot  l_{\tau}t = 
	\mathcal{F}(\sigma\tau,l_{\sigma\tau}\backslash(l_\sigma s\cdot l_\tau t))$, 
	hence the map $\mathcal{F}$ is an isomorphism and  $\mathcal{F}(\sigma,e)G = \kappa(\sigma)$. Consequently  $\mathcal{F}:\mathcal{L}(N,\kappa,\Sigma)\to L$ is an Albert decomposition of $L$ and $\kappa$ is the underlying isomorphism. \\
	Conversely, let $\mathcal{F}:K\times^\triangledown N\to L$ be an Albert  decomposition of $L$. The set $\Sigma$ is a left transversal to $N$ in $L$ and $\mathcal{F}(\sigma,s) = l_\sigma s$ for any 
	$\sigma\in K$ and $s\in N$. Clearly, $(N,\kappa,\Sigma)$ is a data triple of an Albert decomposition of $L$ and the equation $K\times^\triangledown N = \mathcal{L}(N,\kappa,\Sigma)$ follows from Corollary \ref{coincide}. 
\end{proof}

\begin{lem} \label{exten} For any Albert extension $K\times^\triangledown N$ there exists an extension $L$ of 
	$N$ by $K$ and an isomorphism $\mathcal{F}:K\times^\triangledown N$ which is an Albert decomposition of $L$ with respect to $N$.
\end{lem}
\begin{proof} We replace in $K\times N$ the elements $(\epsilon,t)\in\{\epsilon\}\times N$ by the corresponding elements $t\in N$ and define a loop $L$ 
	on the set $\big((K\times N)\setminus \{\epsilon\}\times N\big)\cup N$ in such a way, that the map $(\epsilon,t)\mapsto t$, $t\in N$, together with 
	the identity map on $(K\times N)\setminus \{\epsilon\}\times N$ is an isomorphism $\mathcal{F}:K\times^\triangledown N\to L$. Then $\mathcal{F}:K\times^\triangledown N\to L$ is an Albert decomposition of $L$ with respect to $N$. \end{proof}

\subsection{Albert extensions of order $6$}

\subsubsection*{Extensions of $C_2$ by $C_3$}
Since the loops of order $2$ or $3$ are cyclic groups, the Albert extensions of order $6$ are extensions of a cyclic group by a cyclic group. We investigate the Albert extension loop $L$ of a normal subgroup $C_2 =\{0,1\}$ by the group $C_3 =\{0,1,2\}$ using the construction described in Definition \ref{Albert}. A quasigroup of order two is either $C_2$ or it is given by the multiplication $\triangledown(a,b) = a + b + 1$, $a,b\in C_2$. Hence we have  $\triangledown_{\alpha,\beta}(a,b) = a + b + f(\alpha,\beta)$, where $f:C_3\times C_3\to C_2$ is a function (loop cocycle) satisfying $f(\alpha,0) = f(0,\beta) = 0$, $\alpha,\beta\in C_3$. Let $\mathcal{L}_f$ denote the loop defined by the multiplication  
\[(\alpha,a)(\beta,b) = (\alpha + \beta,\,f(\alpha,\beta) + a + b),\quad  \alpha,
\beta\in C_3,\quad a,b\in C_2\] 
on $C_3\times C_2$. According to Theorem 2.1. in \cite{NaVo} $\{0\}\times C_3$ is contained in the center of $C_3$, i.e. all elements of $\{0\}\times C_3$ commute and associate with all
other elements of $\mathcal{L}_f$, hence the extension is nuclear. Since 
\[(\alpha,a)(\beta,b)\cdot(\gamma,c) = (\alpha + \beta + \gamma,f(\alpha + \beta,\gamma) + f(\alpha,\beta) + a + b + c)\] 
and 
\[(\alpha,a)\cdot (\beta,b)(\gamma,c) = (\alpha + \beta + \gamma,f(\alpha,\beta + \gamma) + a + f(\beta,\gamma) + b + c),\]
the multiplication of $\mathcal{L}_f$ is associative if and only if $f(\alpha + \beta,\gamma) + f(\alpha,\beta) = f(\alpha, \beta +\gamma) + f(\beta,\gamma)$ for any $ \alpha,
\beta,\gamma\in C_3$. Investigating all of the cases $\alpha, \beta, \gamma\in \{1,2\}$, we find that $\mathcal{L}_f$ is a group if and only if 
\begin{equation} \label{f1221} f(1,2) = f(2,1) = f(1,1) + f(2,2).\end{equation} 
 Let $f_i$ be the function defined on the pairs $(1,1)$, $(1,2)$, $(2,1)$, $(2,2)$ with the binary digits of $i = 0,\dots ,15$, i.e. $i = f_i(1,1)2^3 + f_i(1,2)2^2 + f_i(2,1)2 + f_i(2,2)$. 
 Any Albert extension of $C_2$ by $C_3$ is isomorphic to one of the loops $\mathcal{L}_{f_i}$, $i = 0,\dots ,15$.
Let $\mathcal{L}_{f^\prime}$ be the loop determined by the function $f^\prime:C_3\times C_3\to C_2$,  $f^\prime(\alpha,\beta) = f(2\alpha,2\beta)$ and consider the bijective map $\varphi:\mathcal{L}_{f}\to\mathcal{L}_{f^\prime}$ given by $\varphi(\alpha,a) = (2\alpha,a)$. Since 
 \[\varphi(\alpha,a)\varphi(\beta,b) = (2\alpha + 2\beta,\,f^\prime(2\alpha,2\beta) + a + b) = 
 \varphi((\alpha,a)(\beta,b)) = (2\alpha + 2\beta,\,f(\alpha,\beta) + a + b),\] 
 the mapping $\varphi$ is an isomorphism $\mathcal{L}_f\to \mathcal{L}_{f^\prime}$ and we have the isomorphisms $\mathcal{L}_{f_{1}}\cong \mathcal{L}_{f_{8}}$, $\mathcal{L}_{f_{2}}\cong \mathcal{L}_{f_{4}}$, $\mathcal{L}_{f_{3}}\cong \mathcal{L}_{f_{12}}$, $\mathcal{L}_{f_{5}}\cong \mathcal{L}_{f_{10}}$,  $\mathcal{L}_{f_{7}}\cong \mathcal{L}_{f_{14}}$, and $\mathcal{L}_{f_{11}}\cong \mathcal{L}_{f_{13}}$. Any loop $\mathcal{L}_{f_{i}}$ is isomorphic to one of $\mathcal{L}_{f_{0}}$, $\mathcal{L}_{f_{1}}$, $\mathcal{L}_{f_{2}}$, $\mathcal{L}_{f_{3}}$, $\mathcal{L}_{f_{5}}$, $\mathcal{L}_{f_{6}}$, $\mathcal{L}_{f_{7}}$, $\mathcal{L}_{f_{9}}$, $\mathcal{L}_{f_{11}}$ and $\mathcal{L}_{f_{15}}$  
 given by the functions  
 \begin{equation}\label{tabla1}\begin{array}{c||c |c |c |c |c |c |c |c |c |c |}
& f_0 & f_1 & f_2 & f_3 & f_5 & f_6 & f_7 & f_9 & f_{11} & f_{15}\\
 \hline\hline
 (1,1) & 0 & 0 & 0 & 0 & 0 & 0 & 0 & 1 & 1 & 1 \\ 
 \hline
 (1,2) & 0 & 0 & 0 & 0 & 1 & 1 & 1 & 0 & 0 & 1 \\
 \hline
 (2,1) & 0 & 0 & 1 & 1 & 0 & 1 & 1 & 0 & 1 & 1 \\	
 \hline
 (2,2) & 0 & 1 & 0 & 1 & 1 & 0 & 1 & 1 & 1 & 1 
 \end{array}\;\,.\end{equation} 
 Clearly, $\mathcal{L}_{f_0}$, $\mathcal{L}_{f_1}$, $\mathcal{L}_{f_6}$, $\mathcal{L}_{f_7}$, $\mathcal{L}_{f_9}$ and $\mathcal{L}_{f_{15}}$ are commutative.\\
For $\xi\in C_3 =\{0,1,2\}$ denote $\xi'\in C_2 =\{0,1\}$ the element given by $0' = 0$, $1' = 1$ and $2' = 0$. The map $\mathcal{L}_f\to\mathcal{L}_{f^*}$,  
 $(\xi,x)\mapsto(\xi,x + \xi')$ is an isomorphism, if and only if 
  \[(\alpha + \beta, f^*(\alpha,\beta) + a + b + \alpha' + \beta') = 
  (\alpha + \beta, f(\alpha,\beta) + a + b + (\alpha + \beta)')\]
  is an identity. Working with canonical copies of $C_2$ and $C_3$ in $\mathbb Z$ we obtain equivalently  
  \[f^*(\alpha,\beta) + \alpha + \beta = 
 f(\alpha,\beta) + (\alpha + \beta \mod 3) \mod 2,\quad \alpha, \beta\in\in\{0,1,2\}.\] 
Hence we get 
 \[\begin{split}
 	&f^*(\alpha,\beta) = f(\alpha,\beta),\quad &\hbox{if}& \quad \alpha,\beta\in\{0,1\},\\
    &f^*(\alpha,\beta) + 1 = f(\alpha,\beta),\quad &\hbox{if}& \quad \alpha \beta = 2, \\
    &f^*(\alpha,\beta) =  f(\alpha,\beta) + 1, \quad &\hbox{if}& \quad\alpha = \beta = 2.\end{split}\]
i.e. the function $f^*$ satisfies $f^*(1,1) = f(1,1)$ and $f^*(\alpha,\beta) = f(\alpha,\beta) + 1$ if $\alpha,\beta\in\{1,2\}$ and $\alpha = 2$ or $\beta = 2$. It follows that we have the isomorphisms:  $\mathcal{L}_{f_0}\cong\mathcal{L}_{f_7}(\cong\mathcal{L}_{f_{14}})\cong\mathcal{L}_{f_9}$, $\mathcal{L}_{f_1}\cong\mathcal{L}_{f_6}$,   $\mathcal{L}_{f_1}(\cong\mathcal{L}_{f_8})\cong\mathcal{L}_{f_{15}}$, $\mathcal{L}_{f_2}\cong\mathcal{L}_{f_5}$,  $\mathcal{L}_{f_2}(\cong\mathcal{L}_{f_4})\cong\mathcal{L}_{f_3}$,  $\mathcal{L}_{f_3}(\cong\mathcal{L}_{f_{12}})\cong\mathcal{L}_{f_{11}}$ and we can omit $\mathcal{L}_{f_3}$, $\mathcal{L}_{f_5}$, $\mathcal{L}_{f_6}$, $\mathcal{L}_{f_7}$, $\mathcal{L}_{f_{9}}$, $\mathcal{L}_{f_{11}}$, $\mathcal{L}_{f_{15}}$ from the further investigation. $\mathcal{L}_{f_{0}}$ is isomorphic to the cyclic group $C_6$.  $\mathcal{L}_{f_1}$ and $\mathcal{L}_{f_2}$ are commutative and non-commutative proper loops.
We obtain the following 
\begin{prop} \label{ordertwo} Any Albert extension of $C_2$ by $C_3$ is isomorphic to the one of the following loops:\; cyclic group $\mathcal{L}_{f_0}\cong C_6$,\; commutative proper loop  $\mathcal{L}_{f_1}$,\; non-commutative proper loop $\mathcal{L}_{f_9}$. 
The normal subgroup $\{0\}\times C_2$ for each loop is nuclear. 
\end{prop}

\subsubsection*{Extensions of $C_3$ by $C_2$}
In the following we identify $C_3$ with the additive group of the field $\mathbb F_3$ of order $3$. Consider the multiplication table (Latin square) of a quasigroup. Its rows (columns) are permutations describing the left (right) translations. The even permutations of $\mathbb F_3$ are the maps $x\mapsto x + q$ and the odd permutations are $x\mapsto 2x + q$ with some $q\in\mathbb F_3$, since $x\mapsto 2x$ is a transposition. Cosequently any permutation of $\mathbb F_3$ can be expressed as $x\mapsto (1 + \sigma)x + q$ with $\sigma\in\{0,1\}\subset\mathbb F_3$ and $q\in\mathbb F_3$, this permutation is even for $\sigma = 0$ and odd for $\sigma = 1$. It is easy to check that the rows (columns) of the multiplication table are permutations of the same parity. For a given parity of rows (columns), the tables can only differ in the order of the rows (columns). Hence if $x\mapsto (1 + \sigma)x + q$ is the permutation given by the first row (column), then the $i$-th row (column), $i = 2,3$, is the permutation $x\mapsto (1 + \sigma)x + q + (1 + \tau)i$, where $\tau\in\{0,1\}\subset\mathbb F_3$. It follows that a quasigroup multiplication defined on $F_3$ can be expressed by  
\begin{equation}\label{quasigroup} (a,b)\mapsto q + (1 + \sigma)a + (1 + \tau)b,\quad \text{with fixed}\quad\sigma,\tau\in\{0,1\}\subset\mathbb F_3,\; q\in\mathbb F_3.
\end{equation} 
Let $(\alpha,a)(\beta,b)= (\alpha + \beta,\,\triangledown_{\alpha,\beta}(a,b))$, $\alpha,\beta\in  C_2$, $a,b\in\mathbb F_3$, be the multiplication of an Albert extension of $C_3$ by $C_2$. Then   
\[\triangledown_{0,0}(a,b) = a + b,\quad\triangledown_{0,1}(0,b) = b,\quad\triangledown_{1,0}(0,b) = \triangledown_{1,0}(b,0) = b,\quad \text{for any}\quad a,b\in\mathbb F_3.\] 
A loop of order $3$ is necessarily a group, hence $\triangledown_{1,0}(a,b) = a + b$.  
The quasigroup with multiplication $\triangledown_{0,1}(a,b)$ is isomorphic either to  $C_3$ or $\triangledown_{0,1}(a,b) = 2a + b$, hence $\triangledown_{0,1}(a,b) = (1 + \theta)a + b$ with $\theta\in\{0,1\}\subset\mathbb F_3$.
According to (\ref{quasigroup}) we can express 
 \[\triangledown_{1,1}(a,b) = q + (1 + \sigma)a + (1 + \tau)b,\quad \sigma, \tau\in\{0,1\}\subset\mathbb F_3,\; q\in\mathbb F_3.\] 
For $\xi\in C_2 =\{0,1\}$ denote $\xi'\in\mathbb F_3$ the canonical copy of elements of $\{0,1\}$ in $\mathbb F_3$, then  
\begin{equation}\label{xieta}(\xi + \eta)' = \xi' + \eta' + \xi'\eta'\quad\text{for any}\quad \xi,\eta\in C_2 =\{0,1\}.\end{equation} 
We have the decomposition       
\[\triangledown_{\alpha,\beta} = \alpha'\beta'\big(\triangledown_{1,1} - \triangledown_{1,0} - \triangledown_{0,1}\big) + \alpha'\triangledown_{1,0} + \beta'\triangledown_{0,1} + (1 - \alpha')(1 - \beta')\triangledown_{0,0}.\] 
Hence we get   
\begin{equation}\label{nablaalphabeta}\begin{split}\triangledown_{\alpha,\beta}(a,b) &= \alpha'\beta'\big(q + (1 + \sigma)a + (1 + \tau)b - a - b - (1 + \theta)a - b\big) +\\
&+ \alpha'\big(a + b\big) + \beta'\big((1 + \theta)a + b\big) + (1 - \alpha')(1 - \beta')\big(a + b\big) = \\&= \alpha'\beta'\big(q + (\sigma - \theta) a + \tau b\big) + (1 + \beta'\theta) a +  b.\end{split}\end{equation}
Denote $\epsilon = (\sigma - \theta)(1 + \theta) = \sigma + \theta + \sigma\theta$, and hence $\sigma = \epsilon + \theta + \epsilon\theta$, where  $\epsilon\in\{0,1\}\subset\mathbb F_3$. Putting this into (\ref{nablaalphabeta}) we obtain:  
\[\triangledown_{\alpha,\beta}(a,b) = \alpha'\beta'\big(q + \epsilon(1 + \theta)a + \tau b\big) + (1 + \beta'\theta)a + b,\] 
hence the multiplication can be written in the form 
\begin{equation}\label{ximu}(\alpha,a)(\beta,b) = 
(\alpha + \beta,\alpha'\beta'\big(q + \epsilon(1 +\theta)a + \tau b\big) + (1 + \beta'\theta)a + b),\end{equation}
 for any $\alpha,\beta\in C_2$, $a,b\in C_3$, where $\epsilon,\theta,\tau\in\{0,1\}$ and $q \in\{0,1,2\}$ are fixed parameters.\\ 
The map $C_2\times C_3 \to C_2\times C_3$, $(\alpha,a)\mapsto(\alpha,2a)$ is an isomorphism between the loops with parameters $\epsilon$, $\theta$, $\tau$, $q$, respectively  $\epsilon$, $\theta$, $\tau$, $q^* = 2q$, since 
\[\alpha'\beta'\big(q^* + \epsilon(1 +\theta)2a + \tau 2b\big) + (1 + \beta'\theta)2a + 2b =\]
\[ = 2\big(\alpha'\beta'\big(q + \epsilon(1 +\theta)a + \tau b\big) + (1 + \beta'\theta)a + b\big),\quad \alpha,\beta\in C_2,\; a, b\in C_3.\]
Consequently, we can assume $q\in\{0,1\}$.  \\
We obtain the assertion of 
\begin{prop} \label{cyclic}  Let $\mathcal{L}(\epsilon,\theta,\tau,q)$ be the Albert extension defined on $C_2\times C_3$ by the multiplication 
\begin{equation}
\label{Albertmult}(\alpha,a)(\beta,b) = (\alpha + \beta,\alpha'\beta'\big(q + \epsilon(1 + \theta)a + \tau b\big) + (1 + \beta'\theta)a + b),\quad\alpha,\beta\in C_2,\quad a,b\in C_3,\end{equation}
depending on the parameters $\epsilon, \theta, \tau, q\in\{0,1\}$. Any loop of order six  containing a subgroup of order three is isomorphic to an Albert extension $\mathcal{L}(\epsilon,\theta,\tau,q)$.\\
$\mathcal{L}(\epsilon,\theta,\tau,q)$ is commutative if and only if $\epsilon = \tau$ and $\theta = 0$. \\
An Albert extension $\mathcal{L}(\epsilon,\theta,\tau,q)$  is either nuclear or it has precisely one of the left, middle or right nuclear property.  $\mathcal{L}(\epsilon,\theta,\tau,q)$ is left, middle or right nuclear if and only if $\epsilon = 0$, $\epsilon = \tau$ or $\tau = 0$, respectively. \\  
$\mathcal{L}(\epsilon,\theta,\tau,q)$ is a  group if and only if $\epsilon = \tau =0$ and $\theta q = 0$.
\end{prop}
\begin{proof}  First we prove that the multiplication (\ref{Albertmult}) determines a loop. Clearly, $(0,0)$ is the unit element of $\mathcal{L}(\epsilon,\theta,\tau,q)$. The equation $(\xi,x)(\beta,b) = (\gamma,c)$ for $(\xi,x)$ yields $\eta = \beta + \gamma$ and the linear equation 
\[(\beta + \gamma)'\beta'\big(q + \epsilon(1 + \theta)x + \tau b\big) + (1 + \beta'\theta)x + b\big) = c\]	
for $x$. The coefficient of $x$ is 
\begin{equation*}
(\beta + \gamma)'\beta'\epsilon(1 + \theta) + (1 + \beta'\theta) = \begin{cases}
1,              & \text{if}\;\beta = 0\\
\big((1 + \gamma)'\epsilon + 1\big)(1 + \theta),              & \text{if}\;\beta = 1.
\end{cases}
\end{equation*}
Since $1 + \gamma\in C_2$ the coefficient of $x$ is contained in $\{1,2\}$, hence the equation is uniquely solvable.
Similarly the equation $(\alpha,a)(\eta,y) = (\gamma,c)$ for $(\eta,y)$ gives $\eta = \alpha + \gamma$ and \[\alpha'(\alpha + \gamma)'\big(q + \epsilon(1 + \theta)a + \tau y\big) + (1 + \beta'\theta)a + y = c,\] 
which is uniquely solvable, since the coefficient of $y$ is $\alpha'(\alpha + \gamma)'\tau + 1\in\{1,2\}$. Clearly, for any loop of order $6$ a subgroup of order $3$ is necessarily normal, hence $\mathcal{L}(\epsilon,\theta,\tau,q)$ is a loop which is an Albert extension.\\ 
The associativity of the  multiplication is equivalent to the identity  
\begin{equation}\label{assoc}\begin{split} &(\alpha' + \beta' + \alpha'\beta')\gamma'\Big(q + \epsilon(1 + \theta)[\alpha'\beta'(q + \epsilon(1 + \theta)a +\tau b) + (1 +\beta'\theta)a + b] + \tau c\Big) +\\
&+ (1 + \gamma'\theta)\Big(\alpha'\beta'(q + \epsilon(1 + \theta)a +\tau b) + (1 +\beta'\theta)a + b\Big) + c = \\ &= \alpha'(\beta' +\gamma' + \beta'\gamma')\Big(q + \epsilon(1 + \theta)a + \tau [\beta'\gamma'(q + \epsilon(1 + \theta)b + \tau c) +  (1 + \gamma'\theta)b + c]\Big) + \\ &+ \Big(1 + (\beta' +\gamma' + \beta'\gamma')\theta\Big)a + \beta'\gamma'\Big(q + \epsilon(1 + \theta)b + \tau c\Big) +  (1 + \gamma'\theta)b + c, \end{split} \end{equation} 
where $\alpha,\beta,\gamma\in C_2$,\; $a,b,c\in C_3$.
Substituting one by one $\alpha = 0$, $\beta = 0$ and $\gamma = 0$ we obtain that   $\mathcal{L}(\epsilon,\theta,\tau,q)$ is a left, middle or right nuclear extension if and only if $\epsilon = 0$, $\epsilon = \tau$ or $\tau = 0$, respectively.  It follows that $\mathcal{L}(\epsilon,\theta,\tau,q)$ is nuclear if and only if $\epsilon = \tau = 0$. 
Clearly, $\mathcal{L}(\epsilon,\theta,\tau,q)$ is commutative if and only if $\epsilon = \tau$ (middle nuclear) and $\theta = 0$. 
Putting $\epsilon = \tau = 0$ into (\ref{assoc}) we get that $\mathcal{L}(\epsilon,\theta,\tau,q)$ is a group if and only if $\epsilon = \tau = 0$ (nuclear) and  $\theta q = 0$. 
\end{proof}
The map $\mathcal{L}(\epsilon,\theta,\tau,q)\to \mathcal{L}(\epsilon,\theta,\tau,q^*)$,   $(\alpha,a)\mapsto(\alpha,\alpha' + a)$ is an isomorphism if and only if  \[\alpha'\beta'\Big(q^* + \epsilon(1 + \theta)\alpha' + \tau \beta'\Big) +  \alpha'\beta'\theta =  \alpha'\beta'(q + 1)\quad\text{for any}\quad\alpha,\beta\in C_2,\] 
or equivalently  $q^* + \epsilon(1 + \theta) + \theta + \tau = q + 1$. Computing 
\[\epsilon(1 + \theta) + \theta + \tau =\begin{cases}0&\text{if}\quad(\epsilon,\theta,\tau) = (0,0,0),\; (1,1,0),\\2&\text{if}\quad(\epsilon,\theta,\tau) = (0,1,1),\; (1,0,1),\end{cases}\]
we obtain the isomorphisms $\mathcal{L}(0,0,0,0)\cong\mathcal{L}(0,0,0,1)$ and $\mathcal{L}(1,1,0,0)\cong\mathcal{L}(1,1,0,1)$, respectively  
$\mathcal{L}(0,1,1,0)\cong\mathcal{L}(0,1,1,1)$ and $\mathcal{L}(1,0,1,0)\cong\mathcal{L}(1,0,1,1)$. It follows that the loops $\mathcal{L}(0,0,0,1)$, $\mathcal{L}(0,1,1,1)$, $\mathcal{L}(1,0,1,1)$ and  $\mathcal{L}(1,1,0,1)$ can be omitted from the further investigation. \\
The squares of elements belonging to $\{1\}\times C_3$ are 
\[(1,a)^2  = (0,q + \big((1 + \epsilon)(1 + \theta) + 1 + \tau\big)a) =\begin{cases} = (0,q + \tau a),&\text{if}\quad  \epsilon \neq \theta, \\ = (0,q + (2 + \tau)a),&\text{if}\quad \epsilon =\theta.\end{cases}\]
Hence the set $\{(1,a)^2,\; a\in C_3\}$ is either $\{(0,q)\}$ with $q\in\{0,1\}$ or $\{0\}\times C_3$. The extensions, their nuclear properties and the corresponding sets $\{(1,a)^2,\; a\in C_3\}$ are given by the list:
\begin{equation}\label{list}\begin{array}{|c |l |l |}
\hline
\textbf{extension} & \textbf{associativity property} &\bf{\{(1,a)^2,\; a\in C_3\}} \\ 
\hline
\mathcal{L}(0,0,0,0) & \text{cyclic group} & \{0\}\times C_3 \\ 
\hline
\mathcal{L}(0,0,1,0) & \text{dihedral group} & \{(0,0)\} \\
\hline
\mathcal{L}(0,0,1,1) & \text{non-associative nuclear} & \{(0,1)\} \\	
\hline
\mathcal{L}(0,1,0,0)  & \text{left nuclear} & \{(0,0)\} \\
\hline
\mathcal{L}(0,1,0,1)  & \text{left nuclear} & \{(0,1)\}  \\
\hline
\mathcal{L}(0,1,1,0)  & \text{left nuclear} & \{0\}\times C_3  \\
\hline
\mathcal{L}(1,0,0,0) & \text{right nuclear} & \{(0,0)\} \\ 
\hline
\mathcal{L}(1,0,0,1) & \text{right nuclear} & \{(0,1)\} \\ 
\hline
\mathcal{L}(1,0,1,0) & \text{right nuclear} & \{0\}\times C_3 \\ 
\hline
\mathcal{L}(1,1,0,0) & \text{commutative middle nuclear} & \{0\}\times C_3\\ 
\hline
\mathcal{L}(1,1,1,0) & \text{middle nuclear} & \{(0,0)\} \\ 
\hline
\mathcal{L}(1,1,1,1) & \text{middle nuclear} & \{(0,1)\} \\ 
\hline
\end{array}\end{equation} 
These loops have different algebraic properties, hence they are not isomorphic. It follows    
\begin{prop} \label{6loops} Any Albert extension of $C_2$ by $C_3$ is isomorphic to one of the non-isomorphic loops listed in (\ref{list}).  $\mathcal{L}(0,0,0,0)$ is isomorphic to the cyclic group $C_6$, $\mathcal{L}(0,0,1,0)$  is isomorphic to the dihedral group $D_3$ and $\mathcal{L}(0,0,1,1)$ is a non-associative nuclear extension. The others are proper loops with non-nuclear $\{0\}\times C_3$. Only the middle nuclear $\mathcal{L}(1,1,0,0)$ is non-associative and commutative. 
\end{prop}

\subsection{Right nuclear extensions}

In the following we discuss nuclear properties of normal subloops of loops. Since the left, right or middle nuclei of a loop are necessarily groups, we will investigate extensions of groups $G$ by loops $K$. In the following $\mathrm{Sym}(M)$ denotes the group of all permutations of a set M.
\begin{lem} \label{Lgamma} An Albert extension $K\times^\triangledown G$ of $G$ by $K$ is right nuclear if and only if the function $\triangledown_{\xi,\eta}(x,y)$ has the form $\triangledown_{\xi,\eta}(x,y) = \Gamma_{\xi,\eta}(x)y$, $\xi,\eta\in K$, $x,y\in G$, where $\Gamma: K\times K\to\mathrm{Sym}(G)$ is a map with $\Gamma_{\epsilon,\sigma}(e) = e$ and $\Gamma_{\sigma,\epsilon} = \mathrm{Id}$ for all $\sigma\in K$.
\end{lem}
\begin{proof} If $K\times^\triangledown G$ is right nuclear if and only if 
$\triangledown_{\alpha\beta,\epsilon}(\triangledown_{\alpha,\beta}(a,b),c) = \triangledown_{\alpha,\beta}(a,\triangledown_{\beta,\epsilon}(b,c))$ is an identity. Putting $a = e$, $\beta = \epsilon$ we get $\triangledown_{\alpha,\epsilon}(b,c) = bc$ for any $\alpha\in K$, $b,c\in G$. Replacing this into the right nuclear identity we get $\triangledown_{\alpha,\beta}(a,b)c = \triangledown_{\alpha,\beta}(a,bc)$, i.e. $\triangledown_{\alpha,\beta}(a,c) = \triangledown_{\alpha,\beta}(a,e)c$ for all $\alpha,\beta\in K$, $a,c\in G$. Denoting $\Gamma_{\xi,\eta}(x) = \triangledown_{\xi,\eta}(x,e)$ we obtain the first implication. Conversely, if $\triangledown_{\xi,\eta}(x,y) = \Gamma_{\xi,\eta}(x)y$, then we have the equation 
\[\triangledown_{\alpha\beta,\epsilon}(\triangledown_{\alpha,\beta}(a,b),c) = \Gamma_{\alpha,\beta}(a)bc = \triangledown_{\alpha,\beta}(a,\triangledown_{\beta,\epsilon}(b,c)),\]
hence $K\times^\triangledown G$ is right nuclear. 
\end{proof}
\begin{defi} \label{Rinuex} Let $\Gamma: K\times K\to\mathrm{Sym}(G)$ be a map satisfying $\Gamma_{\epsilon,\sigma}(e) = e$ and $\Gamma_{\sigma,\epsilon} = \mathrm{Id}$ for all $\sigma\in K$. We denote by $K\times^\Gamma G$ the \emph{right nuclear Albert extension} of $G$ by $K$ defined by the multiplication $(\alpha,a)(\beta,b)= (\alpha\beta,\Gamma_{\alpha,\beta}(a)b)$, $\alpha,\beta\in K$, $a,b\in G$. 
\end{defi}
\begin{lem}  \label{Psialphabeta}  Assume that the loop $L$ is a right nuclear extension of the normal proper subgroup  $G\lhd L$  by the loop $K$, $\kappa: K\to L/G$ is the underlying isomorphism and $\Sigma$ is a left transversal to $G$ in $L$. Let  $\Gamma: K\times K\to \mathrm{Sym}(G)$ be the map  
defined by 	
\begin{equation} \label{Gammaxieta} \Gamma_{\xi,\eta}(x) = l_{\xi\eta}\backslash (l_\xi x \cdot l_\eta),\quad \xi,\eta\in K, \; x\in G, \quad\text{where} \quad \{l_\sigma\} = \Sigma\cap\kappa(\sigma),\quad \sigma\in K\end{equation} 
and let $K\times^\Gamma G$ be the Albert extension determined by the  multiplications $\triangledown_{\alpha,\beta}(a,b) = \Gamma_{\alpha,\beta}(a)b$. Then the map  $\Phi:(\xi,x) \mapsto l_\xi x$ is an isomorphism $\Phi:K\times^\Gamma G\to L$.
\end{lem}
\begin{proof}  Applying the natural homomorphism $L\to L/G$ to $l_{\xi\eta}\backslash (l_\xi x \cdot l_\eta)$ we get that $\Gamma_{\xi,\eta}(x)\in G$ for any $\xi,\eta\in K$, $ x\in G$, hence $\Gamma_{\xi,\eta}\in\mathrm{Sym}(G)$. The bijective map $\Phi:K\times^\Gamma G\to L$ is an isomorphism since  
$\Phi((\xi,x)(\eta,y)) = \Phi(\xi,x)\Phi(\eta,y)$ follows from 
\[l_{\xi\eta}\big(l_{\xi\eta}\backslash (l_\xi x \cdot l_\eta)\cdot y\big) = \big(l_{\xi\eta}\cdot l_{\xi\eta}\backslash (l_\xi x \cdot l_\eta)\big) y = 
(l_\xi x\cdot l_\eta) y = l_\xi x\cdot l_\eta y.\]
Hence we obtain the assertion.
\end{proof}  
\begin{rem}
If we denote the map $\Gamma_{\epsilon,\tau}:K\to\mathrm{Sym}(G)$ and the function $\Gamma(e):K\times K\to G$ by $\Theta_\tau = \Gamma_{\epsilon,\tau}$ and $f(\sigma,\tau) = \Gamma_{\sigma,\tau}(e)$ then we get the expressions 
\[\Theta_\tau = \Gamma_{\epsilon,\tau}(x) = l_{\eta}\backslash x l_\eta\quad \text{and}\quad f(\sigma,\tau) = \Gamma_{\sigma,\tau}(e) = l_{\xi\eta}\backslash l_\xi l_\eta.\]
\end{rem}
\begin{thm} \label{middle} The right nuclear Albert extension $K\times^\Gamma G$ is 
	\begin{enumerate} 
		\item [\emph{(i)}] middle nuclear if and only if all $\Theta_\tau$, $\tau\in K$, are automorphisms of $G$ and $\Gamma$ has the form $\Gamma_{\sigma,\tau}(s) = f(\sigma,\tau)\Theta_\tau(s)$,   
		\item [\emph{(ii)}] left nuclear if and only if  all $\Theta_\tau$, $\tau\in K$, are automorphisms of $G$ and $\Gamma$ has the form $\Gamma_{\sigma,\tau}(g) = \Theta_{\sigma\tau}(\Theta^{-1}_\sigma(g)) f(\sigma,\tau)$,
	  \item [\emph{(iii)}] nuclear if and only if $\Gamma_{\sigma,\tau}(s) = f(\sigma,\tau)\Theta_\tau(s)$ and $\Theta_{\sigma\tau}\Theta_\sigma^{-1}\Theta_\tau^{-1} = \iota_{f(\sigma,\tau)}$, $\sigma,\tau\in K$, where $\iota_s(t) = sts^{-1}$, $s,t\in G$.
	\end{enumerate}
\end{thm}
\begin{proof} The multiplication of $K\times^\Gamma G$ is associative if and only if $K$ is a group and 
	\begin{equation}\label{asslnuc}\Gamma_{\alpha\beta,\gamma}(\Gamma_{\alpha,\beta}(a)b) =  \Gamma_{\alpha,\beta\gamma}(a)\Gamma_{\beta,\gamma}(b)\end{equation} 
for any $\alpha,\beta,\gamma\in K$ and $a,b\in G$. 
Putting $\beta = \epsilon$ we get that the middle nuclear property of $K\times^\Gamma G$ is equivalent to the identity
$\Gamma_{\alpha,\gamma}(ab) = \Gamma_{\alpha,\gamma}(a)\Theta_\gamma(b)$,  
and with $a = e$ we get the necessary condition  $\Gamma_{\alpha,\gamma}(b) = f(\alpha,\gamma)\Theta_\gamma(b)$. Hence $K\times^\Gamma G$ is middle nuclear if and only if 
 \[\Gamma_{\alpha,\gamma}(ab) = f(\alpha,\gamma)\Theta_\gamma(ab) = \Gamma_{\alpha,\gamma}(a)\Theta_\gamma(b) = f(\alpha,\gamma)\Theta_\gamma(a)\Theta_\gamma(b),\]
consequently $\Theta_\gamma\in\mathrm{Aut}(G)$ for any $\gamma\in K$ giving the assertion (i).\\
Replacing $\alpha = \epsilon$ into the identity (\ref{asslnuc}) we get 
\begin{equation}\label{lenupr}\Gamma_{\beta,\gamma}(\Theta_\beta(a)b) =  \Theta_{\beta\gamma}(a)\Gamma_{\beta,\gamma}(b).\end{equation} 
$\beta = \epsilon$ yields $\Theta_\gamma(ab) =  \Theta_\gamma(a)\Theta_\gamma(b)$, i.e. $\Theta_\gamma\in\mathrm{Aut}(G)$ for any $\gamma\in K$.
Putting $b = e$ and $c = \Theta_\beta(a)$ we get 
\begin{equation}\label{leftnuc} \Gamma_{\beta,\gamma}(c) =   \Theta_{\beta\gamma}(\Theta^{-1}_\beta(c))f(\beta,\gamma).\end{equation} 
Conversely, if (\ref{leftnuc}) holds for any $\beta,\gamma\in K$, $c\in G$ then 
the (\ref{lenupr}) is satisfied and we obtain the assertion (ii). \\
Assume now, that both (i) and (ii) are satisfied, i.e.  \[\Gamma_{\beta,\gamma}(b) = f(\beta,\gamma)\Theta_\gamma(b) = \Theta_{\beta\gamma}(\Theta^{-1}_\beta(b))f(\beta,\gamma).\] 
 Putting $a = \Theta_\gamma(b)$ we get 
$f(\beta,\gamma)af(\beta,\gamma)^{-1} = \Theta_{\beta\gamma}\circ\Theta^{-1}_\beta\circ\Theta^{-1}_\gamma(a)$.
Conversely, using this equation we get $f(\beta,\gamma)\Theta_\gamma(b) = \Theta_{\beta\gamma}(\Theta^{-1}_\beta(b))f(\beta,\gamma)$, hence the assertion (iii) follows. 
\end{proof}
\begin{cor} If $G$ is an abelian group then $K\times^\Gamma G$ is a nuclear extension of $G$ by $K$ if and only if $\Gamma_{\sigma,\tau}(s) = f(\sigma,\tau)\Theta_\tau(s)$ and $\Theta:K\to\mathrm{Aut}(G)$ is an anti-homomorphism. 
\end{cor}
\begin{cor} (A. Dr\'apal and P. Vojt\v{e}chovsk\'y, \cite{DrV} Theorem 2.2.) Let $G$ be an abelian group and $K$, $L$ loops. Then the following conditions are equivalent:
	\begin{enumerate}
		\item $L$ is a nuclear extension of $G$ by $K$,
		\item $L$ is isomorphic to the Bruck extension $K\times^\triangledown G$  with $\triangledown_{\sigma,\tau}(s,t) = s\varphi_\sigma(t)f(\sigma,\tau)$,  
where $f:K\times K\to G$ fulfils $f(\epsilon,\tau) = f(\sigma,\epsilon) = e$ and $\varphi:K\to \mathrm{Aut}(G)$ is a homomorphism.
	\end{enumerate} 
\end{cor}
 \begin{proof} According to Theorem \ref{brualb} $K\times^\triangledown G$ is isomorphic to the Albert extension $K\times^{\triangledown^*}N$ determined by  
$\triangledown^*_{\sigma,\tau}(s,t) = \varphi_{\sigma\tau}^{-1}( \varphi_\sigma(s)\,\varphi_\sigma(\varphi_\tau(t))\,f(\sigma,\tau)) = f(\sigma,\tau)\,\varphi_{\tau}^{-1}(s)\,b$, 
 	where $\varphi_\sigma$ is the map $x\mapsto \triangledown_{\sigma,\epsilon}(e,x)$, $\sigma\in K$. Putting $\Theta = \varphi^{-1}$ we get the assertion.
 \end{proof}

 \subsection{Right nuclear extensions with left or right inverse property}
 
Let $K\times^\Gamma G$ be a right nuclear Albert extension of the group $G$ by the group $K$. In the following we investigate the fulfilment of the left, respectively right inverse property in $K\times^\Gamma G$ with the additional condition that the left, respectively right inverses are of the form $(\alpha^{-1},a^{-1})$.\\ 
Let $\pi:K\times K\to K\times K$ be the involutory permutation $\pi(\alpha,\beta) = (\alpha^{-1},\alpha\beta)$. Define the action of $\pi$ on $\mathrm{Sym}(G)$ by $\varphi^\pi(g) = \varphi(g^{-1})^{-1}$, $\varphi\in\mathrm{Sym}(G)$, $g\in G$. 
 \begin{lem}\label{lip} A right nuclear Albert extension $K\times^\Gamma G$ has the left inverse property with  left inverses $(\alpha^{-1},a^{-1})$, $\alpha\in K$, $a\in G$, if and only if $\Gamma$ is equivariant under the action of $\pi$.\end{lem}
\begin{proof}
$(\alpha^{-1},a^{-1})$ is the left inverse of $(\alpha,a)$ if and only if  $\Gamma_{\alpha^{-1},\alpha\beta}(a^{-1})\Gamma_{\alpha,\beta}(a) = e$, or equivalently 
 \begin{equation}\label{invol} \Gamma_{\pi(\alpha,\beta)}(g) = \Gamma_{\alpha^{-1},\alpha\beta}(g) = \Gamma_{\alpha,\beta}(g^{-1})^{-1} = \Gamma^\pi_{\alpha,\beta}(g)\end{equation}
holds for any $\alpha,\beta\in K$,  $g\in G$.
\end{proof}
Let $\Pi$ denote the group  generated by $\pi$ and denote $\Xi = \{(\sigma,\epsilon),\; \sigma\in K\}\cup \{(\sigma^{-1},\sigma),\; \sigma\in K\}$. Clearly, the set $\Xi$ consists of orbits  $\{(\sigma^{-1},\sigma),(\sigma,\epsilon)\},\; \sigma\in K\}$ of $\Pi$. 
The following construction yields a large class of right nuclear Albert extensions having the left inverse property.
\begin{ex} \label{equivar} Define the map $\Gamma:K\times K\to\mathrm{Sym}(G)$ by  $\Gamma_{\alpha^{-1},\alpha} = \Gamma_{\alpha,\epsilon} = \mathrm{Id}$ on $\Xi$. Choose the value $\Gamma_{\sigma,\tau}\in\mathrm{Sym}(G)$ for an element of any orbit of the group $\Pi$ in $K\times K\setminus \Xi$ arbitrarily and determine the map $\Gamma$ assuming that the map $\Gamma$ is equivariant under $\Pi$. The obtained right nuclear Albert extensions $K\times^\Gamma G$ are neither middle nor left nuclear, if some values of $\Gamma$ are not belonging to the set 
	\[\{\lambda_g\Theta,\; g\in G, \Theta\in\mathrm{Aut}(G)\}\cup\{\rho_g\Theta,\; g\in G, \Theta\in\mathrm{Aut}(G)\}.\]
\end{ex}
These examples show that there are many right nuclear Albert extensions having the left inverse property which are neither middle nor left nuclear. In contrast to the left inverse property the right inverse property is rather restrictive for right nuclear Albert extensions. \\
Let $\rho:K\times K\to K\times K$ be the involutory permutation $\rho(\alpha,\beta) = (\alpha\beta,\beta^{-1})$. Define the action of $\rho$ on $\mathrm{Sym}(G)$ by $\varphi^\rho = \varphi^{-1}$, $\varphi\in\mathrm{Sym}(G)$.
\begin{lem} A right nuclear Albert extension $K\times^\Gamma G$  has the right inverse property with right inverses $(\alpha^{-1},a^{-1})$, $\alpha\in K$, $a\in G$,  if and only if $\Gamma_{\alpha,\beta}(g) = f(\alpha,\beta)g$ and $\Gamma$ is equivariant under the action of $\rho$.
\end{lem} 
\begin{proof} $(\beta^{-1},b^{-1})\in K$ is the right inverse of $(\beta,b)$ if and only if for any $(\alpha,a)$ we have $\Gamma_{\alpha\beta,\beta^{-1}}(\Gamma_{\alpha,\beta}(a)b) = ab$, or equivalently $\Gamma_{\alpha,\beta}(a)b = \Gamma_{\alpha\beta,\beta^{-1}}^{-1}(ab)$. Hence  
\begin{equation}\label{rightinv0} \Gamma^\rho_{\alpha,\beta} = \Gamma_{\alpha\beta,\beta^{-1}} = \Gamma_{\rho(\alpha,\beta)},
\end{equation} 
 consequently $\Gamma_{\alpha,\beta}(b) = \Gamma_{\alpha,\beta}(e b) = \Gamma_{\alpha\beta,\beta^{-1}}^{-1}(eb) = \Gamma_{\alpha,\beta}(e)b = f(\alpha,\beta)b$.  
Moreover the condition (\ref{rightinv0}) yields $f(\alpha\beta,\beta^{-1}) = f(\alpha,\beta)^{-1}$. Conversely,  $\Gamma_{\alpha,\beta}(b) = f(\alpha,\beta)b$ satisfies (\ref{rightinv0}) if  $f(\alpha\beta,\beta^{-1}) = f(\alpha,\beta)^{-1}$, which proves the assertion.
\end{proof}
\begin{cor} A right nuclear Albert extension $K\times^\Gamma G$ having the right inverse property with right inverses $(\alpha^{-1},a^{-1})$, $\alpha\in K$, $a\in G$,  is middle nuclear, too.
\end{cor}

\section{Schreier extensions}


In the following we will investigate right and middle nuclear Albert extensions determined by a map $\Gamma: K\times K\to \mathrm{Sym}(G)$. According to Theorem \ref{middle} (i) in this case the map $\Gamma$ has the form $\Gamma_{\sigma,\tau}(s) = f(\sigma,\tau)\Theta_\tau(s)$, where $\Theta_\tau\in\mathrm{Aut}(G)$ for any $\tau\in K$. 
\begin{defi}  A right and middle nuclear Albert extension is called \emph{Schreier extension}.  A Schreier extension determined by the map $\Gamma_{\sigma,\tau}(s) = f(\sigma,\tau)\Theta_\tau(s)$ will be denoted by $K\times_f^\Theta G$. 
\end{defi} 
We notice that the multiplication of Schreier extensions have the same expression as in Schreier theory of group extensions. The smallest Schreier extensions are described in  Proposition \ref{ordertwo} and Proposition \ref{6loops}.
\begin{rem} The non-associative Schreier extensions of order $6$ of a non-trivial  subgroup are the extensions of a subgroup of order $2$ and a unique extension of a subgroup of order $3$ isomorphic to the loop $\mathcal{L}(0,0,1,1)$.
\end{rem}
It is well known from the classical theory of group extensions that a Schreier extension 
$K\times_f^\Theta G$ is a group if and only if $K$ is associative and the relations 
\begin{equation}\label{lnuc}\Theta_{\sigma\tau}\Theta_\sigma^{-1}\Theta_\tau^{-1} = \iota_{f(\sigma,\tau)},\quad \sigma,\tau\in K,\quad\text{where}\quad \iota_s(t) = sts^{-1},\, s,t\in G.\end{equation}  
as well as the identities 
\begin{equation}\label{faktor}f(\sigma,\tau\rho)^{-1}f(\sigma\tau,\rho)\Theta_\rho\left(f(\sigma,\tau)\right)f(\tau,\rho)^{-1} = e,\quad 
	\sigma,\tau,\rho\in K\end{equation} 
are satisfied, (cf. \cite{Kur}, \S 48). \\
Example \ref{equivar} shows that there are many right nuclear, but  neither middle nor left nuclear Albert extensions $K\times^\Gamma G$ with left inverse property. In contrast to this, we have  
\begin{lem}\label{NS} 
Any Schreier extension $K\times_f^\Theta G$ having the left inverse, the left alternative or the flexible property is nuclear.
\end{lem}
\begin{proof}
It follows from \cite{NS} Propositions 3.7, 3.8, 3.10 that any Schreier extension $K\times_f^\Theta G$ having the left inverse property satisfies  the identity (\ref{lnuc}), hence $K\times_f^\Theta G$ is also left nuclear, (c.f. Theorem \ref{middle} (iii)).  
\end{proof}
\begin{defi} A Schreier extension $K\times_f^\Theta G$ is called  \emph{automorphism-free} if $\Theta_\sigma = \mathrm{Id}$ for all $\sigma\in K$, respectively \emph{factor-free} if $f(\sigma,\tau) = e$ for all $\sigma, \tau\in K$. 
\end{defi}
\begin{cor} An automorphism-free Schreier extension $K\times_f^\mathrm{Id} G$ is nuclear if and only if all values of the function $f$ are contained in the centre of $G$.
\end{cor} 
These corollaries allow to construct examples of Schreier extensions which are  not left nuclear. 

\subsection{Non-nuclear Schreier extensions satisfying the right Bol identity}

According to Lemma \ref{NS} a Schreier extensions $K\times_f^\Theta G$ satisfying some weak-associativity properties are nuclear. Since the left Bol property is much stronger, this observation is valid for Schreier extensions with the left Bol identity. In contrast to this, it is possible to provide large classes of examples of right Bol non-nuclear Schreier extensions. We will use an  extension method of Bol loops, called Scheerer extension,  developed for   classification of compact (left) Bol loops in \cite{nsl}, pp. 42-52. \\
Let $\mathcal{G}_r(K)$ be the group generated by the set $\mathcal{P} = \{\rho_\sigma,\; \sigma\in K\}$ of right translations of the loop $K$. The stabilizer of the identity element $\epsilon\in K$ in $\mathcal{G}_r(K)$  is the right inner mapping group $\mathcal{H}_r(K)$ of $K$ generated by the maps $\rho^{-1}_{\tau\sigma} \rho_\sigma \rho_\tau: K\to K$, $\sigma, \tau\in K$, (cf. the analogous assertion for the left inner mapping group in \cite{nsl}, Lemma 1.31). The set $\mathcal{P}$ of right translations is a right transversal to $\mathcal{H}_r(K)$ in $\mathcal{G}_r(K)$, we denote by $\pi:\mathcal{G}_r(K)\to\mathcal{P}$ the map assigning to $x\in \mathcal{G}_r(K)$ the unique element of $\mathcal{P}$ contained in the right coset $\mathcal{H}_r(K)x$. The multiplication on $\mathcal{P}$ defined by $(x,y)\mapsto \pi(xy):\mathcal{P}\times\mathcal{P}\to\mathcal{P}$ defines a loop  
isomorphic to $K$ (cf. \cite{nsl}, p. 18). 
\begin{prop} Let $K$ be a right Bol loop, $\chi:\mathcal{H}_r(K)\to G$ a homomorphism of the right inner mapping group to the group $G$. The Schreier extension $K\times_f^{\mathrm{Id}} G$ determined by 
	\begin{equation}\label{HrK} f(\tau ,\sigma) = \chi(\rho^{-1}_{\tau\sigma} \rho_\sigma \rho_\tau),\; \tau, \sigma\in K, \quad\quad \Theta_\sigma=\mathrm{Id}, \; \sigma\in K \end{equation}
has the following properties:
	\begin{enumerate}
		\item[(\emph{i)}] it is a right Bol loop,  
		\item[(\emph{ii)}] it is left nuclear if and only if the image $\chi(\mathcal{H}_r(K))$ of $\mathcal{H}_r(K)$  is contained in the centre of $G$, 
		\item[(\emph{iii)}] it is a group if and only if $K$ is a group.
	\end{enumerate}
\end{prop}
\begin{proof} The loop $K$ is a homomorphic image of $K\times_f^{\mathrm{Id}} G$, hence if $K$ is a proper loop then $K\times_f^{\mathrm{Id}} G$ is a proper loop, too. If $K$ is a group then $f(\tau ,\sigma) = e$ for any $\tau, \sigma\in K$ and 
	$K\times_f^{\mathrm{Id}} G$ is the direct product of groups, giving the assertion (iii). The elements $(\epsilon,a)$, $(\beta,b)$ and $(\gamma,c)$ associate in $K\times_f^{\mathrm{Id}} G$ if and only if   
$\chi(\rho^{-1}_{\beta\gamma}\rho_\gamma \rho_\beta)a = a\chi(\rho^{-1}_{\beta\gamma} \rho_\gamma \rho_\beta)$ for all $a\in G$ and $\beta, \gamma\in K$, hence the assertion (ii) follows. The right Bol property of $K\times_f^{\mathrm{Id}} G$ is equivalent to the identity  
	\[\chi(\rho^{-1}_{\gamma(\alpha\beta\cdot\alpha)} \rho_{\alpha\beta\cdot\alpha} \rho_\gamma)c\chi(\rho^{-1}_{\alpha\beta\cdot\alpha} \rho_\alpha\rho_{\alpha\beta}\rho^{-1}_{\alpha\beta} \rho_\beta \rho_\alpha) = \chi(\rho^{-1}_{\left(\gamma\alpha\cdot\beta\right)\alpha} \rho_{\alpha} \rho_{\beta} \rho_\alpha \rho_\gamma)c.\]
Hence the right Bol identity $\rho_{\alpha\beta\cdot\alpha}  = \rho_\alpha \rho_\beta \rho_\alpha$ in $K$ gives the assertion (1).
\end{proof}
\begin{ex} Let $K$ be a right Bol loop with non-abelian right inner mapping group $\mathcal{H}_r(K)$ and $G = \mathcal{H}_r(K)$, $\chi = \mathrm{Id}$. The Schreier extension $K\times_f^{\mathrm{Id}} G$ defined by (\ref{HrK}) is not nuclear and satisfies the right Bol loop identity.
\end{ex}
\begin{prop} \label{theta} Let $K$ be a right Bol loop, $\chi: \mathcal{G}_r(K)\to G$ a homomorphism of the group generated by right translations of $K$ to the group $G$.   The Schreier extension $K\times_f^{\mathrm{Id}} G$ determined 
\begin{equation}\label{GrK}f(\tau ,\sigma) = e,\quad \tau, \sigma\in K, \quad \Theta_\sigma=\iota_{\chi(\rho_\sigma)}, \quad \sigma\in K, \quad \iota_g(h) = ghg^{-1}, \quad g,h\in \mathcal{G}\end{equation} 
has the following properties:
\begin{enumerate}
\item[(\emph{i)}]  it is a right Bol loop,  
\item[(\emph{ii)}] it is left nuclear if and only if the image $\chi(\mathcal{H}_r(K))$ of  $\mathcal{H}_r(K)\subset \mathcal{G}_r(K)$  is contained in the centre of $G$,
\item[(\emph{iii)}] it is a group if and only if $K$ is a group.
\end{enumerate}
\end{prop}
\begin{proof} $K$ is a homomorphic image of $K\times_e^{\Theta}G$, hence if $K$ is a proper loop then $K\times_e^{\Theta}G$ is also a proper loop. If $K$ is a group then $K\times_e^{\Theta}G$ is a semi-direct product of groups and we get the assertion (iii). The elements $(\epsilon,a)$, $(\beta,b)$ and  $(\gamma,c)$ associate if and only if 
	\[\chi(\rho_\gamma)\chi(\rho_\beta) a\chi(\rho_\beta)^{-1} = \chi(\rho_{\beta \gamma}) a\chi(\rho_{\beta\gamma})^{-1}\chi(\rho_\gamma) \quad\text{for any}\quad a\in K,\; \beta,\gamma\in G.\]
Hence $K\times_e^{\Theta}G$  is left nuclear if and only if for any $\beta, \gamma\in K$ the generator  $\chi(\rho^{-1}_{\beta\gamma} \rho_\gamma \rho_\beta)$ of $\chi(\mathcal{H}_r)$ commutes with all elements of $G$, giving the assertion (ii). Since $K$ satisfies the right Bol identity $\rho_{\alpha\beta\cdot\alpha} = \rho_\alpha\rho_\beta\rho_\alpha$ the extension  $K\times_f^{\mathrm{Id}} G$ is also right Bol because the identity    
		 	\[\chi(\rho_{\alpha\beta\cdot\alpha})\;c\;\chi(\rho_{\alpha\beta\cdot\alpha})^{-1}\chi(\rho_\alpha)\chi(\rho_{\beta}) = \chi(\rho_\alpha)\chi(\rho_\beta)\chi(\rho_\alpha)\;c\;
	\chi(\rho_\alpha)^{-1}\]
holds and we obtain the assertion (i).
\end{proof}
 According to Section 1.2 in \cite{nsl} the stabilizer $\mathcal{H}_r(K)$ of the identity element $e\in K$ in the group $\mathcal{G}_r(K)$ does not contain any non-trivial normal subgroup of $\mathcal{G}_r(K)$, hence $\mathcal{H}_r(K)$ is not central in $\mathcal{G}_r(K)$. It follows from Proposition \ref{theta} (ii) that we have the following 
\begin{ex}  Let $K$ be a proper right Bol loop and let be $G = \mathcal{G}_r(K)$, $\chi = \mathrm{Id}$. The Schreier extension $K\times_e^{\Theta} G$ defined by (\ref{GrK}) is not nuclear and satisfies the right Bol identity.
\end{ex}
\begin{prop}
Let $K$ be a group, $K^\prime$ the commutator subgroup of $K$, and $\varphi :K^\prime\to G$ a homomorphism. The Schreier extension $K\times_f^{\mathrm{Id}} G$ determined by the functions 
 \begin{equation}\label{fiszigma}f(\tau ,\sigma) = \varphi(\sigma^{-1}\tau^{-1}\sigma\tau),\; \tau, \sigma\in K, \quad\quad \Theta_\sigma=\mathrm{Id}, \; \sigma\in K\end{equation} 
has the following properties:
	\begin{enumerate}
	\item[(\emph{i)}] it is a right Bol loop,  
	\item[(\emph{ii)}] it is left nuclear if and only if the image $\varphi(K^\prime)$ of $K^\prime$ is contained in the centre of $G$, 
	\item[(\emph{iii)}] it is a group if and only if the homomorphism $\varphi$ is invariant under the conjugation of $K^\prime$ by elements of $K$.
\end{enumerate}
\end{prop}
\begin{proof} The elements $(\alpha,a)$, $(\beta,b)$ and $(\gamma,c)$ of $K\times_f^{\mathrm{Id}} G$ associate if and only if  
\begin{equation}\label{commhom} \varphi(\gamma^{-1}\beta^{-1}\alpha^{-1}\gamma\beta\alpha)a = \varphi(\gamma^{-1}\beta^{-1}\alpha^{-1}\beta\gamma\alpha)a\varphi(\gamma^{-1}\beta^{-1}\gamma\beta)
\end{equation}
for any $\alpha, \beta, \gamma\in K$ and $a\in G$. Consequently $K\times_f^{\mathrm{Id}} G$ is left nuclear if and only if  (\ref{commhom}) is an identity for $\alpha = \epsilon$, hence the assertion (ii) follows. Moreover $K\times_f^{\mathrm{Id}} G$ is a group if and only if it is left nuclear and the identity 
\[\varphi(\gamma^{-1}\beta^{-1}\alpha^{-1}\beta\gamma\alpha)^{-1}\varphi(\gamma^{-1}\beta^{-1}\alpha^{-1}\gamma\beta\alpha) = \varphi(\alpha^{-1}\gamma^{-1}\beta^{-1}\gamma\beta\alpha) = \varphi(\gamma^{-1}\beta^{-1}\gamma\beta)\]
holds. Hence $K\times_f^{\mathrm{Id}} G$ is a group if and only if $\varphi(\alpha^{-1}\xi\alpha) = \varphi(\xi)$ for any $\alpha, \xi\in K$, giving the  assertion (iii). \\ We have 
	\[(\gamma,c)\left[(\alpha,a)(\beta,b)\cdot(\alpha,a)\right] =(\gamma\alpha\beta\alpha,\varphi(\alpha^{-1}\beta^{-1}\alpha^{-1}\gamma^{-1}\alpha\beta\alpha\gamma)\;caba) =\]   
	\[= \left[(\gamma,c)(\alpha,a)\cdot(\beta,b)\right](\alpha,a),\] 
hence the assertion (i) is true.
\end{proof}
\begin{ex} \label{fId} Let $K$ be a group  with non-abelian commutator subgroup $K^\prime$ and let be $G = K^\prime$, $\varphi = \mathrm{Id}$. The Schreier extension $K\times_f^{\mathrm{Id}} K^\prime$ determined by (\ref{fiszigma})  is not nuclear and satisfies the right Bol loop identity.
\end{ex}
\begin{prop} Let $K$ and $G$ be groups and $\varphi :K\to G$ a homomorphism. The Schreier extension $K\times_e^{\Theta} G$ defined by the functions  
\begin{equation}\label{tetaszigma} f(\tau ,\sigma) = e,\quad \sigma, \tau\in K, \quad \Theta_\sigma = \iota_{\varphi(\sigma)},\quad \sigma\in K,\quad \iota_s(t) = sts^{-1}, \quad s,t\in G,\end{equation}  
is a right Bol loop. Then the following conditions are equivalent:
\begin{enumerate}
	\item[(i)] $K\times_e^{\Theta} G$ is left nuclear, 
	\item[(ii)] the image $\varphi(K^\prime)$ of the commutator subgroup $K^\prime\subset K$  is contained in the centre of $G$, 
	\item[(iii)] $K\times_f^{\mathrm{Id}} G$ is a group.
\end{enumerate}  
\end{prop}
\begin{proof} The elements $(\alpha,a), (\beta,b), (\gamma,c)\in K\times_e^{\Theta} G$ associate if and only if  
\begin{equation}\label{asseq}  \varphi(\gamma)^{-1}\varphi(\beta)^{-1}\varphi(\gamma)\varphi(\beta)a = a\varphi(\gamma)^{-1}\varphi(\beta)^{-1}\varphi(\gamma)\varphi(\beta),\quad \beta, \gamma\in K,\; a\in G.\end{equation}
Since these expressions are independent of $\alpha\in K$ the conditions $(i)$, $(ii)$ and $(iii)$ are equivalent. Moreover we have 
\[(\gamma,c)\left[(\alpha,a)(\beta,b)\cdot(\alpha,a)\right] = (\gamma\alpha\beta\alpha,\varphi(\alpha\beta\alpha)\;c\varphi(\alpha^{-1})a\varphi(\beta)^{-1}b\varphi(\alpha)^{-1}a)\]
and    
\[\left[(\gamma,c)(\alpha,a)\cdot(\beta,b)\right](\alpha,a) =  (\gamma\alpha\beta\alpha,\varphi(\alpha)\varphi(\beta)\varphi(\alpha)c\varphi(\alpha)^{-1}a\varphi(\beta)^{-1}b\varphi(\alpha)^{-1}a).\]
Hence $K\times_e^{\Theta} G$ is a right Bol loop.
\end{proof}
We obtain the following
\begin{ex} \label{eTheta} Let $K$ be a group with non-abelian commutator subgroup and let be $G = K$ and $\varphi = \mathrm{Id}$. The Schreier extension
$K\times_e^{\Theta} K$ determined by (\ref{tetaszigma}) is not nuclear and satisfies the right Bol identity.
\end{ex}

\subsection{Schreier decompositions}

\begin{defi} An Albert decomposition of a loop $L$ with respect to its middle and right nuclear normal subgroup $G$ is called a \emph{Schreier decomposition} of $L$. 
\end{defi}
For a data triple $(G,\kappa,\Sigma)$ of a Schreier decomposition of $L$ we have the expressions  
\begin{equation}\label{lTf} l_\sigma = \Sigma\cap\kappa(\sigma), \quad \Theta_{\sigma} = \mathrm{T}^{-1}_{l_\sigma} \quad \text{and}\quad f(\sigma,\tau) = 
l_{\sigma\tau}\backslash l_\sigma l_\tau ,\quad \sigma,\tau\in K, \end{equation}
where the maps $\mathrm{T}_x:G\to G$, $x\in L$, defined by $g\mapsto x\backslash gx$, are automorphisms of $G$. If $r\in G$ then $\mathrm{T}_r$ is the inner automorphism  $\iota_r(t) = rtr^{-1}$,\, $r,t\in G$.
\begin{lem}\label{decomp} The automorphisms $\mathrm{T}_{xr}$ and $\mathrm{T}_{rx}$ of $G$ with $x\in L$ and $r\in G$ can be decomposed as   
	\[\mathrm{T}_{xr} = \mathrm{T}_x\circ\iota_r, \quad \mathrm{T}_{rx} = \iota_r\circ\mathrm{T}_x.\] 
\end{lem}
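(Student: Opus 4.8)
The plan is to work from the defining relation of the maps $\mathrm{T}_y = \rho_y^{-1}\lambda_y$. Since $\mathrm{T}_y(t) = \rho_y^{-1}(\lambda_y t) = \rho_y^{-1}(yt)$ and $\rho_y^{-1}(z)$ is the unique element $w$ with $wy = z$, the element $\mathrm{T}_y(t)$ is characterized by $\mathrm{T}_y(t)\cdot y = y\cdot t$. By Lemma \ref{T} each restriction $\mathrm{T}_y|_G$ is an automorphism of $G$, so it maps $G$ into $G$ and is a bijection; in particular $\mathrm{T}_{xr}|_G$ and $\mathrm{T}_{rx}|_G$ are injective on $G$, and $\iota_r = \mathrm{T}_r|_G$. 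Because of this injectivity, to establish each identity for a fixed $t\in G$ it suffices to exhibit the candidate element of $G$ on the right-hand side and verify that it satisfies the relation characterizing the left-hand side; uniqueness then forces equality.

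For the first identity $\mathrm{T}_{xr}|_G = \mathrm{T}_x|_G\circ\iota_r$, I would set $s = \iota_r(t) = rtr^{-1}\in G$ and $u = \mathrm{T}_x(s)\in G$, so that $u\cdot x = x\cdot s$. The candidate relation to check is $u\cdot(xr) = (xr)\cdot t$. I would reassociate along the chain $u\cdot(xr) = ux\cdot r = (x\cdot s)\cdot r = x\cdot(sr) = x\cdot(rt) = (xr)\cdot t$, where the first and third steps use right nuclearity of $r\in G$, the equality $sr = rt$ holds inside the group $G$, and the last step uses right nuclearity of $t\in G$. This shows $u = \mathrm{T}_{xr}(t)$, i.e. $\mathrm{T}_{xr}(t) = \mathrm{T}_x(\iota_r(t))$.

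For the second identity $\mathrm{T}_{rx}|_G = \iota_r\circ\mathrm{T}_x|_G$ the computation is symmetric but uses the middle nucleus for the factor $r$ now sitting on the left. I would set $p = \mathrm{T}_x(t)\in G$ and $w = \iota_r(p) = rpr^{-1}\in G$, and verify $w\cdot(rx) = (rx)\cdot t$ via $w\cdot(rx) = wr\cdot x = rp\cdot x = r\cdot(px) = r\cdot(\mathrm{T}_x(t)\cdot x) = r\cdot(xt) = (rx)\cdot t$, where the first two reassociations use middle nuclearity of $r\in G$ and of $p\in G$ (here $wr = rp$ holds in $G$), the middle equality uses $\mathrm{T}_x(t)\cdot x = x\cdot t$, and the final step uses right nuclearity of $t\in G$. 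Hence $w = \mathrm{T}_{rx}(t)$, giving $\mathrm{T}_{rx}(t) = \iota_r(\mathrm{T}_x(t))$.

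These are direct computations, so I do not expect a genuine obstacle; the only care required is bookkeeping. At every reassociation I must confirm that the factor around which I associate lies in $G$, so that the appropriate nuclear identity genuinely applies, and I must select the correct nucleus at each step: the first identity needs only the \emph{right} nucleus, whereas the second genuinely needs the \emph{middle} nucleus for the left factor $r$ (and again the right nucleus for $t$). The conclusion in each case rests on the bijectivity of $\mathrm{T}_{xr}|_G$ and $\mathrm{T}_{rx}|_G$ from Lemma \ref{T}, which is what turns the verification of the defining relation into an identity of automorphisms of $G$.
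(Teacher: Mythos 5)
Your proof is correct and takes essentially the same route as the paper: both arguments rest on the characterization $\mathrm{T}_y(t)\cdot y = y\cdot t$ and the same chains of reassociations (right nuclearity of $r,s,t$ for the first identity; middle nuclearity of $r$ and of $\mathrm{T}_x(t)$, plus right nuclearity of $t$, for the second), with your version merely running the paper's computation in reverse by verifying that the candidate element satisfies the defining relation. One small remark: the uniqueness that closes each verification comes from bijectivity of the right translation $\rho_{xr}$ (respectively $\rho_{rx}$), which you correctly state in your first paragraph, not from the injectivity of $\mathrm{T}_{xr}|_G$ invoked in your last paragraph --- but this slip does not affect the argument.
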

\begin{proof} Since $s$ and $r$ belong to $\mathcal{N}_r(L)$, we have 
	\[\mathrm{T}_{xr}(s)\cdot xr = xr\cdot s = x\cdot\iota_r(s)r = x\iota_r(s)\cdot r =
	\mathrm{T}_x(\iota_r(s))x\cdot r = \mathrm{T}_x(\iota_r(s))(xr),\] 
	hence the first assertion is true. Similarly,  the second assertion follows from   
	\[\mathrm{T}_{rx}(s)\cdot rx = rx\cdot s = r\cdot xs  =  r\mathrm{T}_x(s)\cdot x = 
	\iota_r(\mathrm{T}_x(s))(rx),\]
	since $s\in \mathcal{N}_r(L)$ and $\mathrm{T}_x(s)$,  $r\in \mathcal{N}_m(L)$.
\end{proof}
\begin{cor} The map $\mathrm{T}:L\to \mathrm{Aut}(G)$ induces a map 
	\[\mathcal{A}: xG\mapsto \mathrm{T}_x\mathrm{Inn}(G): L/G\to\mathrm{Aut}(G)/\mathrm{Inn}(G),\]
	where $\mathrm{Inn}(G)$ denotes the group of inner automorphisms of $G$.
\end{cor}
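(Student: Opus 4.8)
The plan is to verify that the assignment $\mathcal{A}$ is well-defined, i.e.\ that the coset $\mathrm{T}_x\mathrm{Inn}(G)$ depends only on the coset $xG$ and not on the chosen representative $x$. First I would recall the standard fact that for any group $G$ the inner automorphism group $\mathrm{Inn}(G)$ is normal in $\mathrm{Aut}(G)$ --- since $\phi\,\iota_r\,\phi^{-1} = \iota_{\phi(r)}$ for $\phi\in\mathrm{Aut}(G)$ and $\iota_r\in\mathrm{Inn}(G)$ --- so that $\mathrm{Aut}(G)/\mathrm{Inn}(G)$ is a genuine group and the left cosets $\mathrm{T}_x\mathrm{Inn}(G)$ are meaningful. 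I would also note that Lemma \ref{T} guarantees each $\mathrm{T}_x|_G$ genuinely lands in $\mathrm{Aut}(G)$, so the target of the map $\mathrm{T}|_G$ is correct to begin with.

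The heart of the argument is the decomposition established in Lemma \ref{decomp}. Suppose $xG = yG$ with $x, y\in L$. Since $y = ye\in yG = xG$, there is an element $r\in G$ with $y = xr$. Applying the first identity of Lemma \ref{decomp} then gives $\mathrm{T}_y|_G = \mathrm{T}_{xr}|_G = \mathrm{T}_x|_G\circ\iota_r$. Because $\iota_r\in\mathrm{Inn}(G)$, this means $(\mathrm{T}_x|_G)^{-1}\circ\mathrm{T}_y|_G = \iota_r\in\mathrm{Inn}(G)$, so the two automorphisms lie in the same left coset modulo $\mathrm{Inn}(G)$, whence $\mathrm{T}_y\,\mathrm{Inn}(G) = \mathrm{T}_x\,\mathrm{Inn}(G)$. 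This shows precisely that the value $\mathcal{A}(xG) = \mathrm{T}_x\,\mathrm{Inn}(G)$ is independent of the representative, so $\mathcal{A}$ is a well-defined map $L/G\to\mathrm{Aut}(G)/\mathrm{Inn}(G)$.

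I do not expect a serious obstacle here: once Lemma \ref{decomp} is available, the entire content of the statement reduces to well-definedness, and the only point requiring care is the passage from the coset equality $xG = yG$ to the representative relation $y = xr$ with $r\in G$. This is immediate in a loop, because every element lies in its own left coset. It is worth emphasizing that the second identity of Lemma \ref{decomp}, $\mathrm{T}_{rx}|_G = \iota_r\circ\mathrm{T}_x|_G$, is not needed for this corollary, but it would become relevant if one later wished to understand how $\mathcal{A}$ interacts with the loop structure of $L/G$; for the present claim the right-hand decomposition alone suffices.
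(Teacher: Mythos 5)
Your proof is correct and is exactly the argument the paper intends: the corollary is stated as an immediate consequence of Lemma \ref{decomp}, and your well-definedness check via $\mathrm{T}_{xr}|_G = \mathrm{T}_x|_G\circ\iota_r$ spells out precisely that implicit reasoning. The added remarks on normality of $\mathrm{Inn}(G)$ and on Lemma \ref{T} are sound but not essential to the claim.
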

\begin{thm} \label{image} The image of the map  $\mathrm{T}:L\to \mathrm{Aut}(G)$ consists of inner automorphisms if and only if there exists a left transversal $\Sigma$ of $L/G$ contained in the commutant $\mathcal{C}_L(G)$ of $G$. 
\end{thm}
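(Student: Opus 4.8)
The plan is to reduce everything to the single observation that an element $x\in L$ commutes with $G$ precisely when the induced automorphism $\mathrm{T}_x|_G$ is trivial, and then to use the decomposition rule of Lemma~\ref{decomp} to pass between a chosen coset representative and an arbitrary element of its coset. First I would record the auxiliary fact: for $x\in L$ one has $\mathrm{T}_x|_G = \mathrm{Id}$ if and only if $x\in\mathcal{C}_L(G)$. Indeed $\mathrm{T}_x(t) = xt/x$, so $\mathrm{T}_x(t) = t$ is equivalent to $xt = tx$; demanding this for all $t\in G$ is exactly the condition $x\in\mathcal{C}_L(G)$. Both implications will then be short consequences of this equivalence together with Lemma~\ref{decomp}.

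For the direction assuming the transversal exists, let $\Sigma\subset\mathcal{C}_L(G)$ be a left transversal of $L/G$. Every $x\in L$ admits a unique decomposition $x = ls$ with $l\in\Sigma$ and $s\in G$. Since $l\in\mathcal{C}_L(G)$, the auxiliary fact gives $\mathrm{T}_l|_G = \mathrm{Id}$, and Lemma~\ref{decomp} yields $\mathrm{T}_x|_G = \mathrm{T}_{ls}|_G = \mathrm{T}_l|_G\circ\iota_s = \iota_s$. Hence $\mathrm{T}_x|_G$ is the inner automorphism $\iota_s$, so the image of $\mathrm{T}|_G$ consists of inner automorphisms.

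For the converse, assume every $\mathrm{T}_x|_G$ is inner. I would construct $\Sigma$ by choosing, for each coset $C\in L/G$, a representative lying in $\mathcal{C}_L(G)$. Fixing any $x$ with $xG = C$, the hypothesis gives $r\in G$ with $\mathrm{T}_x|_G = \iota_r$; set $l = xr^{-1}\in xG = C$. Lemma~\ref{decomp} then gives $\mathrm{T}_l|_G = \mathrm{T}_{xr^{-1}}|_G = \mathrm{T}_x|_G\circ\iota_{r^{-1}} = \iota_r\circ\iota_{r^{-1}} = \mathrm{Id}$, so $l\in\mathcal{C}_L(G)$ by the auxiliary fact. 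Taking $e$ as the representative of $G$ itself (which lies in $\mathcal{C}_L(G)$) and one such $l$ for every other coset assembles the desired left transversal $\Sigma\subset\mathcal{C}_L(G)$.

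I do not expect a serious obstacle: once the equivalence $\mathrm{T}_x|_G = \mathrm{Id}\Leftrightarrow x\in\mathcal{C}_L(G)$ is in place, both directions are essentially one-line applications of Lemma~\ref{decomp}. The only points needing care are checking that $xr^{-1}$ remains in the coset $xG$ (immediate, since $r^{-1}\in G$) and that the coset-by-coset choice of representatives genuinely forms a left transversal through the identity; the latter is merely the observation that distinct cosets are disjoint and that the identity coset is handled by $e$.
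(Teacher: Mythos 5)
Your proof is correct and takes essentially the same route as the paper: both directions rest on the equivalence $\mathrm{T}_x|_G = \mathrm{Id} \Leftrightarrow x\in\mathcal{C}_L(G)$ combined with the decomposition $\mathrm{T}_{xr}|_G = \mathrm{T}_x|_G\circ\iota_r$ from Lemma~\ref{decomp}. The only cosmetic difference is that the paper corrects a fixed arbitrary transversal via $x\mapsto x\cdot g(x)^{-1}$ where $\mathrm{T}_x|_G=\iota_{g(x)}$, whereas you assemble the transversal coset by coset; the construction is the same.
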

\begin{proof} Assume that for any $x\in L$ the map $\mathrm{T}_x$ is  inner automorphism. Let $\Sigma$ be a left transversal of $L/G$ and  $g:\Sigma\to G$ a map satisfying $g(e) = e$ and $\mathrm{T}_x = \iota_{g(x)}$ 
	for any $x\in\Sigma$. Clearly, the set $\{x\cdot g(x)^{-1};\; x\in\Sigma\}\subset C_L(G)$ is a left transversal of $L/G$. According to Lemma \ref{decomp}, 
	$$\mathrm{T}_{x\cdot g(x)^{-1}} = \mathrm{T}_x\circ\iota_{g(x)^{-1}} = \iota_{g(x)}\circ\iota_{g(x)^{-1}} = \mathrm{Id}$$ 
	and hence $\Sigma^*\subset\mathcal{C}_L(G)$. Conversely, let $\Sigma$ be a left transversal of $L/G$ such that $\mathrm{T}_x = \mathrm{Id}_G$ for all $x\in\Sigma$. 
	Any element of $L$ is a product $x\cdot r$ with $x\in\Sigma$, $r\in G$ and hence Lemma \ref{decomp} yields that $\mathrm{T}_{x\cdot r} = \mathrm{T}_x\circ\iota_r = \iota_r$, 
	i.e. $\mathrm{T}_{x\cdot r}$ is  inner automorphism of $G$. 
\end{proof}
\begin{lem} \label{homonuc} The mapping $\mathrm{T}:L\to\mathrm{Aut}(G)$ is a homomorphism if and only if $G$ is nuclear.
\end{lem}
\begin{proof} For any $s\in G$, $x, y\in L$ we have $s\in \mathcal{N}_r(L)$,  $\mathrm{T}_y(s)\in \mathcal{N}_m(L)$ and hence 
	\[\mathrm{T}_{xy}(s)\cdot xy = x\cdot\mathrm{T}_y(s)y = x\mathrm{T}_y(s)\cdot y =
	\mathrm{T}_x(\mathrm{T}_y(s))x\cdot y.\] 
	It follows that $\mathrm{T}:L\to\mathrm{Aut}(G)$ is a homomorphism if and only if for any $x,y \in L,\; s\in G$ one has 
	$\mathrm{T}_x(\mathrm{T}_y(s))x\cdot y = \mathrm{T}_x(\mathrm{T}_y(s))\cdot xy$.  
	Since $\mathrm{T}_x,\; \mathrm{T}_y:G\to G$ are bijective maps, the map $\mathrm{T}:L\to\mathrm{Aut}(G)$ is a homomorphism if and only $G$ is left nuclear.
\end{proof}
It follows from Proposition \ref{NS} the following   
\begin{cor} The map $\mathrm{T}:L\to\mathrm{Aut}(G)$ is a homomorphism if $L$ satisfies one of the following conditions:\par
	(i)\;\; left inverse property,\par
	(ii)\; left alternative,\par
	(iii) flexible.
\end{cor}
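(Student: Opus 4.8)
The plan is to chain together the two results immediately preceding the statement, so that no new computation is needed. First I would appeal to Proposition \ref{inv}: each of the three hypotheses---the left inverse property, the left alternative property, and flexibility---guarantees that any middle and right nuclear normal subgroup of $L$ is nuclear. Thus, under the standing assumption that $G$ is middle and right nuclear together with any one of the conditions (i)--(iii), the subgroup $G$ is in fact nuclear.

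Next I would invoke the Lemma immediately above the corollary, which characterizes exactly when $\mathrm{T}|_G$ is a homomorphism for a middle and right nuclear normal subgroup: it asserts that $\mathrm{T}|_G : L \to \mathrm{Aut}(G)$ is a homomorphism if and only if $G$ is nuclear. Combining this equivalence with the nuclearity established in the first step yields at once that $\mathrm{T}|_G$ is a homomorphism, which is the desired conclusion.

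Since both ingredients are already available, there is no genuine obstacle here; the statement is a formal consequence of combining the preceding Lemma with Proposition \ref{inv}. The only care required is to observe that each of the three conditions is covered by the corresponding part of Proposition \ref{inv}, and that the biconditional in the Lemma is applied only in the direction \emph{nuclear implies homomorphism}, which is all the corollary claims.
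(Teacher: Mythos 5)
Your proposal is correct and matches the paper's own reasoning exactly: the paper derives this corollary by combining Proposition \ref{inv} (any of the three properties forces a middle and right nuclear normal subgroup to be nuclear) with the immediately preceding Lemma ($\mathrm{T}|_G$ is a homomorphism if and only if $G$ is nuclear), used precisely in the direction \emph{nuclear implies homomorphism}. No gaps; this is the intended argument.
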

According to Proposition 3.2.(i) in \cite{NS} the normal subgroup $\{\epsilon\}\times G$ of a Schreier extension $K\times_f^\Theta G$ is nuclear if and only if the maps $\Theta$ 
and $f$ satisfy the condition (\ref{lnuc}). Hence we have 
\begin{cor} For a Schreier extension $K\times_f^\Theta G$ the map $\mathrm{T}|_{\{\epsilon\}\times G} : K\times_f^\Theta G\to\mathrm{Aut}(\{\epsilon\}\times G)$ is a homomorphism if and only 
	if $K\times_f^\Theta G$ satisfies condition (\ref{lnuc}). 
\end{cor}

\subsection{Properties of Schreier decompositions}

\begin{thm} A loop $L$ has an automorphism-free Schreier decomposition with respect to a middle and right nuclear normal subgroup $G$ if and only if one of the following equivalent conditions is fulfilled:\\[1ex]
	(A)\; the image of the map $\mathrm{T}:L\to \mathrm{Aut}(G)$ consists of inner automorphisms, \\[1ex] 
	(B)\; there exists a left transversal of $L/G$ which is contained in the commutant $\mathcal{C}_L(G)$ of $G$ in $L$. (cf. Theorem 4. \cite{NSt2}).
\end{thm}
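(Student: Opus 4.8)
The plan is to reduce the statement to Theorem \ref{sect} together with Theorem \ref{image}. Since the equivalence of conditions (A) and (B) is precisely the assertion of Theorem \ref{image}, I would only need to establish that $L$ admits an automorphism-free Schreier decomposition $\mathcal{L}(\mathrm{Id},f)$ with respect to $G$ if and only if condition (B) holds, and then invoke Theorem \ref{image} for the equivalence $(A)\Leftrightarrow(B)$.

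The key step I would isolate first is the elementary identity that, for a middle and right nuclear normal subgroup $G$ and any $x\in L$, one has $\mathrm{T}_x|_G = \mathrm{Id}_G$ if and only if $x\in\mathcal{C}_L(G)$. This follows at once from $\mathrm{T}_x(t) = xt/x$: the condition $\mathrm{T}_x|_G = \mathrm{Id}_G$ amounts to $xt = tx$ for every $t\in G$, which is exactly the defining condition for $x$ to lie in the commutant $\mathcal{C}_L(G)$. This observation is already implicit in the proof of Theorem \ref{image}.

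With this in hand, the forward direction is immediate. Given an automorphism-free Schreier decomposition $\mathcal{F}:\mathcal{L}(\mathrm{Id},f)\to L$, Theorem \ref{sect} tells me that its associated transversal $\Sigma = \{l_\sigma = \mathcal{F}(\sigma,e);\ \sigma\in K\}$ satisfies $\mathrm{Id} = \Theta_\sigma = \mathrm{T}_{l_\sigma}^{-1}|_G$, hence $\mathrm{T}_{l_\sigma}|_G = \mathrm{Id}_G$ and therefore $l_\sigma\in\mathcal{C}_L(G)$ for every $\sigma$. Thus $\Sigma\subset\mathcal{C}_L(G)$, which is exactly (B). Conversely, starting from a transversal $\Sigma\subset\mathcal{C}_L(G)$ furnished by (B), I would form the data pair $(\mathrm{Id},\Sigma)$ for $G$ (taking $K = L/G$ and $\kappa = \mathrm{Id}$). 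By formula (\ref{lTf}) the corresponding Schreier loop has $\Theta_\sigma = \mathrm{T}_{l_\sigma}^{-1}|_G = \mathrm{Id}$, using the commutant identity above, so it is automorphism-free; and Theorem \ref{sect} guarantees that $\mathcal{F}(\sigma,s) = l_\sigma s$ is a Schreier decomposition of $L$ with respect to $G$. This yields the desired automorphism-free Schreier decomposition.

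I do not anticipate a genuine obstacle, since both implications rest only on the formula $\Theta_\sigma = \mathrm{T}_{l_\sigma}^{-1}|_G$ supplied by Theorem \ref{sect} and on the commutant identity, with no further computation required. The only point deserving care is checking that the transversal produced in the forward direction is a bona fide left transversal through the identity; this is automatic, as it is precisely the transversal $\{\mathcal{F}(\sigma,e)\}$ attached to $\mathcal{F}$ by Theorem \ref{sect}.
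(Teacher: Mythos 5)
Your proposal is correct and follows essentially the same route as the paper: both reduce the statement to the data-pair correspondence of Theorem \ref{sect} (formula (\ref{lTf})), observe that $\Theta_\sigma = \mathrm{T}_{l_\sigma}^{-1}|_G = \mathrm{Id}$ is equivalent to $\Sigma\subset\mathcal{C}_L(G)$, and invoke Theorem \ref{image} for the equivalence of (A) and (B). Your write-up is merely more explicit than the paper's (which treats both directions in one sentence), so there is nothing to add.
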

\begin{proof} Let $(G,\kappa,\Sigma)$ be a data triple for Schreier decomposition of $L$ and let $l:K\to L$ be the map $ l_\sigma = \Sigma\cap\kappa(\sigma)$. 
	The Schreier extension $\mathcal{L}(G,\kappa,\Sigma)$ corresponding to 
	$(G,\kappa,\Sigma)$ is automorphism-free if and only if 
	$\mathrm{T}_{l_\sigma} = \mathrm{Id}_G$ for any $\sigma\in K$, or equivalently, the 
	left transversal $\Sigma = \{l_\sigma;\; \sigma\in K\}$ of $L/G$ is contained in the commutant $\mathcal{C}_L(G)$ of $G$. Hence we obtain the assertion (B). 
	The equivalence of conditions (A) and (B) is proved in Theorem \ref{image}.  \end{proof}
\begin{thm} A loop $L$ has a factor-free Schreier decomposition with respect to a middle and right nuclear 
	normal subgroup $G$ if and only if $L$ contains a left transversal $\Sigma$ of $L/G$ which is a subloop of $L$ isomorphic to $L/G$.
\end{thm}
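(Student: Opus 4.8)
The plan is to read off both directions from the correspondence established in Theorem \ref{sect}. Since $G$ is assumed middle and right nuclear, that theorem tells us that every Schreier decomposition $\mathcal{F}:\mathcal{L}(\Theta,f)\to L$ with respect to $G$ has the form $\mathcal{F}(\sigma,s)=l_\sigma s$ for a left transversal $\Sigma=\{l_\sigma;\ \sigma\in K\}$ with $l_\sigma=\mathcal{F}(\sigma,e)$, and that the structure functions are given by (\ref{lTf}); in particular $f(\sigma,\tau)=l_{\sigma\tau}\backslash l_\sigma l_\tau$. The decomposition is factor-free precisely when $f(\sigma,\tau)=e$ for all $\sigma,\tau\in K$, and since $l_{\sigma\tau}\backslash x=e$ is equivalent to $x=l_{\sigma\tau}$, this happens if and only if $l_\sigma l_\tau=l_{\sigma\tau}$ for all $\sigma,\tau\in K$, i.e.\ the section $l:K\to L$ is multiplicative. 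So the whole proof reduces to matching ``the section $l$ is multiplicative'' with ``$\Sigma$ is a subloop isomorphic to $K$''.

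For the forward implication I would start from a factor-free Schreier decomposition and use the converse half of Theorem \ref{sect} to produce $l$ and $\Sigma$ as above; factor-freeness then gives $l_\sigma l_\tau=l_{\sigma\tau}$. The map $l:K\to L$ is injective, because distinct $\sigma$ label distinct cosets $\kappa(\sigma)$ while $l_\sigma\in\kappa(\sigma)$, so $l$ is a bijection onto $\Sigma$. To see that $\Sigma$ is a subloop I would check closure under the two divisions by transporting the corresponding equations from $K$: writing $\rho=\sigma\backslash\tau$ in $K$ gives $l_\sigma l_\rho=l_{\sigma\rho}=l_\tau$, whence $l_\sigma\backslash l_\tau=l_{\sigma\backslash\tau}\in\Sigma$, and symmetrically $l_\sigma/ l_\tau=l_{\sigma/\tau}\in\Sigma$; together with $e=l_\epsilon\in\Sigma$ and closure under the product this makes $\Sigma$ a subloop, and the bijective multiplicative map $l$ is then an isomorphism $K\to\Sigma$.

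For the reverse implication I would fix an isomorphism $j:K\to\Sigma$ onto the given subloop transversal and build the data pair $(\kappa,\Sigma)$ with $\kappa=\pi\circ j$, where $\pi:L\to L/G$ is the canonical homomorphism. The restriction $\pi|_\Sigma$ is a homomorphism that is bijective onto $L/G$ (surjective and injective because $\Sigma$ meets each coset in exactly one point), hence an isomorphism, so $\kappa$ is an isomorphism $K\to L/G$ and $(\kappa,\Sigma)$ is indeed a data pair. Because $\Sigma\cap\kappa(\sigma)=j(\sigma)$, the associated section is $l_\sigma=j(\sigma)$, which is multiplicative, so (\ref{lTf}) yields $f(\sigma,\tau)=l_{\sigma\tau}\backslash l_\sigma l_\tau=e$. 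The Schreier loop corresponding to $(\kappa,\Sigma)$ is thus factor-free, and Theorem \ref{sect} provides the desired factor-free Schreier decomposition $\mathcal{F}(\sigma,s)=l_\sigma s$ of $L$ with respect to $G$.

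The argument is short once Theorem \ref{sect} is in hand, so there is no deep obstacle; the only points that need genuine care are the two verifications around subloops---closure of $\Sigma$ under the divisions $\backslash$ and $/$ in the forward direction, and the fact that $\pi|_\Sigma$ is an isomorphism in the reverse direction. Both rest on the same elementary observation that $\Sigma$ being a left transversal means it meets every coset of $G$ exactly once, and both are pure quasigroup bookkeeping rather than anything involving the nuclear hypotheses, which enter only implicitly through the applicability of Theorem \ref{sect}.
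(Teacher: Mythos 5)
Your proposal is correct and follows essentially the same route as the paper: both reduce the statement, via Theorem \ref{sect}, to the observation that $f(\sigma,\tau)=l_{\sigma\tau}\backslash l_\sigma l_\tau = e$ holds for all $\sigma,\tau\in K$ exactly when the section $l:K\to L$ is a loop homomorphism onto a transversal $\Sigma$ that is a subloop isomorphic to $K$. You merely spell out details the paper leaves implicit (injectivity of $l$, closure of $\Sigma$ under the divisions, and that $\pi|_\Sigma$ gives the required isomorphism in the converse), which is sound bookkeeping, not a different argument.
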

\begin{proof} Using the formula $ f(\sigma,\tau) = 
	l_{\sigma\tau}\backslash l_\sigma l_\tau$ in (\ref{lTf}) we obtain that the Schreier extension defined by (\ref{lTf}) is factor-free if and only if the map $l:K\to L$ satisfies 
	$l_\sigma l_\tau = l_{\sigma\tau}$ for any $\sigma,\tau\in K$, and hence  the map $l:K\to L$
	is a loop homomorphism. It follows that $L$ has a factor-free Schreier decomposition if and only if 
	there exists a left transversal $\Sigma$ of $L/G$ which is a subloop of $L$. \end{proof}
The following assertion shows the change of the Schreier decomposition of a loop $L$ with respect to a normal subgroup $G$, if we alter the underlying isomorphism.
\begin{prop} \label{change} Let $K\times_f^\Theta G$ be a Schreier decomposition of a loop $L$ with underlying isomorphism $\kappa :K\to L/G$ and let $\mu$ be 
	 automorphism of $K$. The Schreier extension $K\times_{\tilde{f}}^{\widetilde{\Theta}}G$ is a Schreier decomposition of $L$ with underlying isomorphism  
	$\kappa\circ\mu:K\to L/G$ if the functions $\widetilde{\Theta}:K\to\mathrm{Aut}(G)$, $\tilde{f}:K\times K\to G$ are expressed by 
	\begin{equation} \label{aut}\widetilde{\Theta}_\tau = \Theta_{\mu(\tau)},\;\tilde{f}(\sigma,\tau) = f(\mu(\sigma),\mu(\tau)).\end{equation}
\end{prop}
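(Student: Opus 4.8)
The plan is to reduce everything to Theorem \ref{sect} by reindexing a fixed transversal through the automorphism $\mu$. By the converse part of Theorem \ref{sect}, the given Schreier decomposition $\mathcal{F}:\mathcal{L}(\Theta,f)\to L$ has the form $\mathcal{F}(\sigma,s)=l_\sigma s$ with $l_\sigma=\mathcal{F}(\sigma,e)$, the set $\Sigma=\{l_\sigma;\,\sigma\in K\}$ is a left transversal of $L/G$ with $l_\sigma=\Sigma\cap\kappa(\sigma)$, and the functions $\Theta$, $f$ satisfy equations (\ref{lTf}). Since $\mu$ is an automorphism of $K$ and $\kappa$ is an isomorphism $K\to L/G$, the composite $\kappa\circ\mu$ is again an isomorphism $K\to L/G$, so $(\kappa\circ\mu,\Sigma)$ is a data pair for $G$ with the \emph{same} transversal $\Sigma$. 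The map $\tilde{l}:K\to L$ associated to this data pair, $\tilde{l}_\tau=\Sigma\cap(\kappa\circ\mu)(\tau)=\Sigma\cap\kappa(\mu(\tau))$, is then simply the reindexing $\tilde{l}_\tau=l_{\mu(\tau)}$, because $\mu$ permutes $K$.

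Next I would apply Theorem \ref{sect} to the data pair $(\kappa\circ\mu,\Sigma)$. The Schreier loop corresponding to it is, by definition (\ref{lTf}), determined by the functions $\mathrm{T}_{\tilde{l}_\tau}^{-1}|_G$ and $\tilde{l}_{\sigma\tau}\backslash\tilde{l}_\sigma\tilde{l}_\tau$. Substituting $\tilde{l}_\tau=l_{\mu(\tau)}$ and using (\ref{lTf}) for the original pair gives $\mathrm{T}_{\tilde{l}_\tau}^{-1}|_G=\mathrm{T}_{l_{\mu(\tau)}}^{-1}|_G=\Theta_{\mu(\tau)}=\widetilde{\Theta}_\tau$; and since $\mu$ is a homomorphism, $\mu(\sigma\tau)=\mu(\sigma)\mu(\tau)$, so $\tilde{l}_{\sigma\tau}\backslash\tilde{l}_\sigma\tilde{l}_\tau=l_{\mu(\sigma)\mu(\tau)}\backslash l_{\mu(\sigma)}l_{\mu(\tau)}=f(\mu(\sigma),\mu(\tau))=\tilde{f}(\sigma,\tau)$. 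Thus the Schreier loop corresponding to $(\kappa\circ\mu,\Sigma)$ is precisely $L(\widetilde{\Theta},\tilde{f})$ with the functions (\ref{aut}). The direct part of Theorem \ref{sect} then yields that $\tilde{\mathcal{F}}(\tau,s)=\tilde{l}_\tau s$ is a Schreier decomposition of $L$ with respect to $G$ whose underlying isomorphism is $\kappa\circ\mu$, which is exactly the assertion.

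There is no genuine obstacle; the entire argument is the bookkeeping observation that precomposing the underlying isomorphism with $\mu$ corresponds to keeping the transversal $\Sigma$ fixed while relabelling its elements by $\mu$. The one point that genuinely uses more than the bijectivity of $\mu$ is the computation of $\tilde{f}$: the homomorphism property $\mu(\sigma\tau)=\mu(\sigma)\mu(\tau)$ is exactly what turns $\tilde{l}_{\sigma\tau}=l_{\mu(\sigma\tau)}$ into $l_{\mu(\sigma)\mu(\tau)}$, so that $\tilde{f}$ becomes $f$ evaluated at $\mu$ in both arguments. Were $\mu$ merely a bijection of the underlying set, $\widetilde{\Theta}$ would still transform correctly, but $\tilde{f}$ would not in general match $f(\mu(\sigma),\mu(\tau))$.
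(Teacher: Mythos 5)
Your proof is correct, but it takes a genuinely different route from the paper's. You reduce everything to Theorem \ref{sect}: the converse part gives you the transversal $\Sigma=\{l_\sigma\}$ and the relations (\ref{lTf}), you then form the new data pair $(\kappa\circ\mu,\Sigma)$ whose associated map is the reindexing $\tilde{l}_\tau=l_{\mu(\tau)}$, compute that its corresponding Schreier loop has exactly the functions (\ref{aut}), and invoke the direct part of Theorem \ref{sect} to conclude. The paper instead argues entirely inside the Schreier loops: it defines the explicit map $\mathcal{M}:(\sigma,s)\mapsto(\mu(\sigma),s)$, checks by hand that it is an isomorphism $L(\widetilde{\Theta},\tilde{f})\to\mathcal{L}(\Theta,f)$ inducing the identity on $\bar{G}$, and composes it with the given decomposition $\mathcal{F}$; the underlying isomorphism is then read off as $\mathcal{F}\circ\mathcal{M}(\sigma,e)G=\kappa(\mu(\sigma))$. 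The two arguments hinge on the same single fact --- $\mu(\sigma\tau)=\mu(\sigma)\mu(\tau)$ --- which in your version enters when you rewrite $\tilde{l}_{\sigma\tau}=l_{\mu(\sigma)\mu(\tau)}$ to identify $\tilde{f}$, and in the paper's version enters when verifying that $\mathcal{M}$ is multiplicative; indeed the decomposition you produce, $\tilde{\mathcal{F}}(\tau,s)=l_{\mu(\tau)}s$, coincides with the paper's $\mathcal{F}\circ\mathcal{M}$. What your route buys is economy: no new isomorphism needs to be verified, since Theorem \ref{sect} already packages the correspondence between data pairs and decompositions (note only that you should justify $l_\sigma=\Sigma\cap\kappa(\sigma)$, which holds because the underlying isomorphism of $\mathcal{F}$ is $\kappa$, so $l_\sigma=\mathcal{F}(\sigma,e)\in\kappa(\sigma)$ and distinct $l_\sigma$ lie in distinct cosets). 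What the paper's route buys is the intermediate isomorphism $\mathcal{M}$ fixing $\bar{G}$ pointwise, which is precisely the notion of ``equivalence in a wider sense'' studied in Section 7, so the explicit construction feeds directly into the later lemma and theorem there.
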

\begin{proof} We denote the multiplication of $K\times_{\tilde{f}}^{\widetilde{\Theta}} G$ by $\tilde{\cdot}$  and define the map 
	\[\mathcal{M}:(\sigma,s)\mapsto (\mu(\sigma),s): K\times G \to K\times G.\] 
	Since 
	\[\mathcal{M}((\sigma,s)\tilde{\cdot}(\tau,t)) = \mathcal{M}(\sigma\,\tau,\tilde{f}(\sigma\,\tau)\,\widetilde{\Theta}_\tau(s)\,t) = 
	\mathcal{M}(\sigma,s)\cdot\mathcal{M}(\tau,t),\] 
	the map $\mathcal{M}:K\times_{\tilde{f}}^{\widetilde{\Theta}}G\to K\times_f^\Theta G$  is  isomorphism inducing the identity on 
	the subgroup $\{\epsilon\}\times G$. It follows that $\mathcal{F}\circ\mathcal{M}:K\times_{\tilde{f}}^{\widetilde{\Theta}} G\to L$ 
	is  isomorphism extending the isomorphism  $\mathcal{I}:\{\epsilon\}\times G\to G$, $\mathcal{I}(\epsilon,t) = t$. Hence $K\times_{\tilde{f}}^{\widetilde{\Theta}} G$ is a 
	Schreier decomposition of $L$ with underlying isomorphism $\tilde{\kappa}:K\to L/G$ defined by \[\tilde{\kappa}(\sigma) = 
	\mathcal{F}\circ\mathcal{M}(\sigma,e)G, \quad \sigma\in K,\]  satisfying $\tilde{\kappa}(\sigma) = \kappa\circ\mu(\sigma)$. 
\end{proof}
Now, we investigate the change of a Schreier decomposition, if we alter the left transversal $\Sigma$ in the data triple $(G,\kappa,\Sigma)$.
\begin{thm} \label{cnlt}  Let $n:K\to G$ be a function with $n(\epsilon) = e$ and consider the left transversal 
	\begin{equation}\label{newlt}
		\bar{\Sigma} = \{l_\sigma n(\sigma)\in\kappa(\sigma),\; \sigma\in K\}\subset L.\end{equation}  
	The maps $\bar{\Theta}:K\to\mathrm{Aut}(G)$ and $\bar{f}:K\times K\to G$ of the Schreier extension  $K\times_{\bar{f}}^{\bar{\Theta}} G$ corresponding to the data triple  $(G,\kappa,\bar{\Sigma})$  for the Schreier decomposition of $L$ can be expressed as 
	       	\begin{equation} \label{tetaef} \bar{\Theta}_{\sigma} = \iota^{-1}_{n(\sigma)}\circ\Theta_{\sigma},\quad \bar{f}(\sigma,\tau) = 
		n(\sigma\tau)^{-1}f(\sigma,\tau) \Theta_{\tau}(n(\sigma))\,n(\tau).
	\end{equation} 
\end{thm}
\begin{proof}
	We compute the maps $\bar{\Theta}_{\sigma} = \mathrm{T}^{-1}_{\bar{l}_\sigma} \; \text{and}\; \bar{f}(\sigma,\tau) = 
	\bar{l}_{\sigma\tau}\backslash(\bar{l}_\sigma \bar{l}_\tau)$ \; corresponding to the left transversal (\ref{newlt}). Since 
	$ n(\sigma)$ belongs to the right and to the middle nucleus, the product $l_\sigma\,n(\sigma)\cdot \bar{\Theta}_{\sigma}(t) = 
	t\cdot l_\sigma\,n(\sigma)$ equals to 
	\[t\,l_\sigma\cdot n(\sigma) = l_\sigma\,\Theta_{\sigma}(t)\cdot n(\sigma) = l_\sigma\cdot\Theta_{\sigma}(t)\,n(\sigma) = 
	l_\sigma\, n(\sigma)\cdot n(\sigma)^{-1}\,\Theta_{\sigma}(t)\,n(\sigma). \]
	Hence 
	\begin{equation}\label{bartheta} \bar{\Theta}_{\sigma} = \iota^{-1}_{n(\sigma)}\circ\Theta_{\sigma}.\end{equation} 
	Similarly, using that $n(\sigma)$ belongs 
	to the middle and $n(\tau)$ to the right nucleus, the product 
	\begin{equation}\label{product}l_{\sigma\tau}\, n(\sigma\tau) \cdot \bar{f}(\sigma,\tau) =  l_{\sigma}\,n(\sigma)\cdot l_{\tau}\,n(\tau)\end{equation}  
	can be written as   
	\[l_{\sigma}\, \big(n(\sigma)\cdot l_{\tau}\,n(\tau)\big) = l_{\sigma}\, \big(n(\sigma)\, l_{\tau}\cdot n(\tau)\big) = 
	l_\sigma\,\big( l_\tau\cdot \Theta_{\tau}(n(\sigma))\,n(\tau)\big). \] 
	Since $\Theta_{\tau}(n(\sigma))\,n(\tau)$ is a right nuclear element, the expression (\ref{product}) equals to 
	$(l_\sigma l_\tau)\cdot \Theta_{\tau}(n(\sigma))\,n(\tau)$. Replacing $l_\sigma l_\tau = l_{\sigma\tau}\cdot\big( n(\sigma\tau) 
	\,n(\sigma\tau)^{-1}\cdot f(\sigma,\tau)\big)$ and using that $ n(\sigma\tau)$ and  $n(\sigma\tau)^{-1}$ belong to the middle nucleus, 
	we obtain for the expression (\ref{product}) that 
	\[l_{\sigma\tau}\, n(\sigma\tau) \cdot \bar{f}(\sigma,\tau) = l_{\sigma\tau}\, n(\sigma\tau) \cdot 
	\big(n(\sigma\tau)^{-1}f(\sigma,\tau) \Theta_{\tau}(n(\sigma))\,n(\tau)\big),\]
	and hence 
	\begin{equation}\label{barf}\bar{f}(\sigma,\tau) = n(\sigma\tau)^{-1}f(\sigma,\tau) \Theta_{\tau}(n(\sigma))\,n(\tau).\end{equation} 
	Using equations (\ref{bartheta}) and (\ref{barf}) we obtain the  assertion.
\end{proof} 

\subsection{Equivalent Schreier extensions}

Let $K$ be a loop, $G$ a group and let $K\times_f^\Theta G$ and  $K\times_{f^\prime}^{\Theta^\prime} G$ be Schreier extensions. 
\begin{defi} The Schreier extensions $K\times_f^\Theta G$ and  $K\times_{f^\prime}^{\Theta^\prime} G$ are called \emph{equivalent in a wider sense} if there exists an 
	extension $L$ of $G$ by $K$ such that $K\times_f^\Theta G$ and  $K\times_{f^\prime}^{\Theta^\prime} G$ are Schreier decompositions of $L$.
\end{defi}
\begin{lem} The Schreier extensions $K\times_f^\Theta G$ and  $K\times_{f^\prime}^{\Theta^\prime} G$ are equivalent in a wider sense if and only 
	if there exists an isomorphism $K\times_f^\Theta G\to K\times_{f^\prime}^{\Theta^\prime} G$ fixing all elements of $\{\epsilon\}\times G\subset K\times G$.\end{lem}
\begin{proof} Let $K\times_f^\Theta G$ and  $K\times_{f^\prime}^{\Theta^\prime} G$ be Schreier decompositions of a loop $L$ and let 
	$\mathcal{F}:K\times_f^\Theta G\to L$ and $\mathcal{F}^\prime:K\times_{f^\prime \Theta^\prime} G\to L$ be the isomorphisms extending the isomorphism $\mathcal{I}:\{\epsilon\}\times G\to G$ defined by $\mathcal{I}(\epsilon,t) = t$. Then the isomorphism $\mathcal{F}^{\prime -1}\circ\mathcal{F}: K\times_f^\Theta G\to K\times_{f^\prime}^{\Theta^\prime} G$ fixes all 
	elements of $\{\epsilon\}\times G$. Conversely, assume that $\psi:  K\times_f^\Theta G\to K\times_{f^\prime}^{\Theta^\prime} G$ is  isomorphism 
	fixing all elements of $\{\epsilon\}\times G$. According to Lemma \ref{exten} there is a loop $L$ and an isomorphisms $\mathcal{F}:K\times_f^\Theta G\to L$ 
	extending the isomorphism $\mathcal{I}:\{\epsilon\}\times G\to G$. Clearly, the isomorphism $\mathcal{F}\circ\psi^{-1}: K\times_{f^\prime}^{\Theta^\prime} G\to L$ 
	extends the isomorphism $\mathcal{I}:\{\epsilon\}\times G\to G$, hence $K\times_{f^\prime}^{\Theta^\prime} G$ is also a Schreier decomposition of $L$.
\end{proof}
\begin{defi} The Schreier extensions $K\times_f^\Theta G$ and  $K\times_{f^\prime}^{\Theta^\prime} G$ are called \emph{equivalent} if there exists an extension $L$ of $G$ 
	by $K$ such that $K\times_f^\Theta G$ and  $K\times_{f^\prime}^{\Theta^\prime} G$ are Schreier decompositions of $L$ with respect to  $G$ with the same underlying 
	isomorphism $\kappa:K\to L/G$.
\end{defi}
\begin{thm} \label{equi}Let $K\times_f^\Theta G$ and  $K\times_{f^\prime}^{\Theta^\prime} G$ be Schreier extensions. \\[1ex]
	\emph{(A)}\; \;$K\times_{f^\prime}^{\Theta^\prime} G$ is equivalent to $K\times_f^\Theta G$ if and only if there is a function $n:K\to G$ with $n(\epsilon) = e$ 
	such that $\Theta^\prime$ and $f^\prime$ are expressed by  
	\begin{equation} \label{ekviv}\Theta^\prime_{\sigma} = \iota^{-1}_{n(\sigma)}\circ\Theta_{\sigma},\quad\text{and}\quad f^\prime(\sigma,\tau) = 
	n(\sigma\tau)^{-1}f(\sigma,\tau) \Theta_{\tau}(n(\sigma))\,n(\tau).\end{equation}
	\emph{(B)}\; \;$K\times_{f^\prime}^{\Theta^\prime} G$ is equivalent in a wider sense to $K\times_f^\Theta G$  if and only if there is a function $n:K\to G$ with 
	$n(\epsilon) = e$, and an automorphism $\mu\in\mathrm{Aut}(K)$, such that $\Theta^\prime$ and $f^\prime$ are expressed by 
	\[\Theta^\prime_{\sigma} = \iota^{-1}_{n\circ\mu(\sigma)}\circ\Theta_{\mu(\sigma)},\]  
	and
	\[ f^\prime(\sigma,\tau) = 
	\big(n\circ\mu(\sigma\tau)\big)^{-1}f(\mu(\sigma),\mu(\tau)) \Theta_{\mu(\tau)}\big(n\circ\mu(\sigma)\big)\,n\circ\mu(\tau).\]
\end{thm}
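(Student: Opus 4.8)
The theorem asks to characterize equivalence (part (A)) and equivalence in a wider sense (part (B)) of two Schreier loops in terms of explicit formulas for $\Theta'$ and $f'$. The machinery for this has essentially been assembled already: Theorem \ref{cnlt} computes how $\Theta$ and $f$ change when the left transversal is replaced by $\{l_\sigma n(\sigma)\}$, and Proposition \ref{change} computes how they change when the underlying isomorphism $\kappa$ is post-composed with an automorphism $\mu\in\mathrm{Aut}(K)$. My plan is to prove (A) first and then deduce (B) by combining the transversal-change with an automorphism of $K$.

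\emph{Part (A).}
By definition, $\mathcal{L}(\Theta',f')$ is equivalent to $\mathcal{L}(\Theta,f)$ iff both are Schreier decompositions of a common extension $L$ of $G$ by $K$ with the \emph{same} underlying isomorphism $\kappa:K\to L/G$. First I would use Lemma \ref{exten} to realize $\mathcal{L}(\Theta,f)$ concretely as a Schreier decomposition $\mathcal{F}:\mathcal{L}(\Theta,f)\to L$ with respect to $G$, so that by Theorem \ref{sect} there is a left transversal $\Sigma=\{l_\sigma=\mathcal{F}(\sigma,e)\}$ with $\Theta_\sigma=\mathrm{T}_{l_\sigma}^{-1}|_G$ and $f(\sigma,\tau)=l_{\sigma\tau}\backslash l_\sigma l_\tau$, and the underlying isomorphism is $\kappa$. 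For the forward direction I would observe that a second Schreier decomposition $\mathcal{F}'$ of the \emph{same} $L$ with the \emph{same} $\kappa$ produces a second left transversal $\bar\Sigma=\{\bar l_\sigma=\mathcal{F}'(\sigma,e)\}$ lying in the same cosets, i.e. $\bar l_\sigma\in\kappa(\sigma)=l_\sigma G$; hence $\bar l_\sigma=l_\sigma n(\sigma)$ for a unique function $n:K\to G$ with $n(\epsilon)=e$. Then Theorem \ref{cnlt} gives exactly the stated formulas for $\Theta'=\bar\Theta$ and $f'=\bar f$. For the converse, given such an $n$, I would define the transversal $\Sigma(\bar l)$ as in (\ref{newlt}) and invoke Theorem \ref{cnlt} together with Theorem \ref{sect} to conclude that $\mathcal{L}(\Theta',f')$ is a Schreier decomposition of the same $L$ with the same $\kappa$, hence equivalent to $\mathcal{L}(\Theta,f)$.

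\emph{Part (B).}
Here the two decompositions are allowed to have different underlying isomorphisms, which (since both map onto $L/G$) must differ by an automorphism $\mu\in\mathrm{Aut}(K)$: the second underlying isomorphism is $\kappa\circ\mu$. The plan is to factor the passage from $(\Theta,f)$ to $(\Theta',f')$ into two steps. First apply Proposition \ref{change} to pass from $(\Theta,f)$ with underlying isomorphism $\kappa$ to $(\widetilde\Theta,\tilde f)$ with $\widetilde\Theta_\tau=\Theta_{\mu(\tau)}$, $\tilde f(\sigma,\tau)=f(\mu(\sigma),\mu(\tau))$ and underlying isomorphism $\kappa\circ\mu$. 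Then apply Part (A) to the pair $(\widetilde\Theta,\tilde f)$, replacing its transversal via a function $\tilde n:K\to G$, so that
\[
\Theta'_\sigma=\iota^{-1}_{\tilde n(\sigma)}\circ\widetilde\Theta_\sigma
=\iota^{-1}_{\tilde n(\sigma)}\circ\Theta_{\mu(\sigma)},
\]
\[
f'(\sigma,\tau)=\tilde n(\sigma\tau)^{-1}\tilde f(\sigma,\tau)\,\widetilde\Theta_\tau\big(\tilde n(\sigma)\big)\,\tilde n(\tau).
\]
Setting $\tilde n=n\circ\mu$ converts these into the displayed formulas of (B). Conversely, given $n$ and $\mu$ satisfying those formulas, I would run the two constructions in the same order to exhibit a common extension $L$ with underlying isomorphisms $\kappa$ and $\kappa\circ\mu$.

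\emph{Main obstacle.}
The conceptual content is already packaged in Theorem \ref{cnlt} and Proposition \ref{change}, so I expect no hard computation; the delicate point is purely bookkeeping in Part (B), namely verifying that composing the automorphism substitution $\mu$ with the transversal substitution $\tilde n$ yields exactly the substitution $n\circ\mu$ in the \emph{right order}, so that the $\iota^{-1}_{n\circ\mu(\sigma)}\circ\Theta_{\mu(\sigma)}$ and the argument $\Theta_{\mu(\tau)}(n\circ\mu(\sigma))$ appear with the correct indices. I would be careful that $\mu$ is applied before $n$ (since $\mu$ relabels $K$ and $n$ adjusts the transversal within the already-relabelled cosets), which is precisely why $n$ enters everywhere composed as $n\circ\mu$ rather than as $n$ alone.
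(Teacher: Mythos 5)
Your proposal is correct and follows essentially the same route as the paper: part (A) reduces to Theorem \ref{cnlt} after observing that two decompositions with the same underlying isomorphism yield transversals differing by a function $n:K\to G$, and part (B) follows by composing Proposition \ref{change} (change of underlying isomorphism by $\mu\in\mathrm{Aut}(K)$) with part (A). Your write-up merely fills in details the paper leaves implicit (the use of Lemma \ref{exten} and Theorem \ref{sect} to realize the common extension, and the explicit converse directions), which is a faithful elaboration rather than a different argument.
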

\begin{proof} For the equivalent $K\times_f^\Theta G$ and  $K\times_{f^\prime}^{\Theta^\prime} G$ there exists a loop $L$ such that $K\times_f^\Theta G$ and  $K\times_{f^\prime}^{\Theta^\prime} G$ 
	are Schreier decompositions of $L$ with the same underlying isomorphism $\kappa:K\to L/G$. Hence the assertion (A) follows 
	from Theorem \ref{cnlt}. If $K\times_{f^\prime}^{\Theta^\prime} G$ is equivalent in a wider sense to $K\times_f^\Theta G$, then according to Proposition \ref{change} a change of the underlying isomorphism $K\to L/G$ of $L(T^\prime,f^\prime)$ by an automorphism $\mu$ of $K$ we obtain equivalent Schreier extensions. Hence the assertion (A) implies the assertion (B).
\end{proof} 
Now, we apply Theorem \ref{equi} (A) to Examples \ref{fId} and \ref{eTheta} in the case if $K$ is a perfect group, i.e. $K$ equals its own commutator subgroup. 
\begin{prop} 
Let $K$ be a perfect group. The Schreier extensions 
$K\times_f^{\mathrm{Id}} K$, respectively $K\times_e^{\Theta} K$  defined by 
\[f(\tau ,\sigma) = \sigma^{-1}\tau^{-1}\sigma\tau,\; \tau, \sigma\in K, \quad\quad \Theta_\sigma=\mathrm{Id}, \; \sigma\in K,\] 
respectively by 
\[f(\tau ,\sigma) = e,\; \sigma, \tau\in K, \quad \Theta_\sigma = \iota_{\sigma},\; \sigma\in K, \quad\text{where}\quad \iota_s(t) = sts^{-1}, \; s,t\in G,\]
are equivalent right Bol loops 
such that their normal subgroup $\{\epsilon\}\times G$ is not left nuclear.
\end{prop}
\begin{proof} We denote $f(\sigma,\tau) = \tau^{-1}\sigma^{-1}\tau\sigma$, $\Theta_\tau=\mathrm{Id}$  and $f^\prime(\sigma,\tau) = e$, $\Theta^\prime_\sigma = \iota_{\sigma}$ . Define $n(\sigma)=\sigma^{-1}$. Putting these functions into equations (\ref{ekviv}) we obtain the assertion.
\end{proof} 

\end{document}